\newtheorem{theorem}{Theorem}[section]
\newtheorem{lemma}[theorem]{Lemma}
\newtheorem{corollary}[theorem]{Corollary}
\newtheorem{definition}[theorem]{Definition}
\newtheorem{example}[theorem]{Example}
\newtheorem{remark}[theorem]{Remark}
\newtheorem{proposition}[theorem]{Proposition}
\newcommand{\p}{\mathbf{p}}
\newcommand{\bs}{\boldsymbol}
\DeclarePairedDelimiter\floor{\lfloor}{\rfloor}
\begin{document}

\title{\textbf{Spinor Structures on Free Resolutions of Codimension Four Gorenstein ideals}}

\author[Ela Celikbas]{Ela Celikbas}
\address{Department of Mathematics, West Virginia University, Morgantown, WV 26506, USA.}
\email{ela.celikbas@math.wvu.edu}

\author[Jai Laxmi]{Jai Laxmi}
\address{School of Mathematics, Tata Institute of Fundamental Research,
Mumbai-400005, India.}
\email{laxmiuohyd@gmail.com}
\email{jailaxmi@math.tifr.res.in}

\author{Jerzy Weyman}
\address{Department of Mathematics, University of Connecticut, Storrs, CT 06269, USA.}
\email{jerzy.weyman@uconn.edu}
\address{Instytut Matematyki, Uniwersytet Jagiello\'{n}ski, Krak\'{o}w 30-348, Poland.} 
\email{jerzy.weyman@uj.edu.pl}

\date{\today}

\keywords{Gorenstein ring, spinor coordinates, spinor structures, Clifford algebra, orthogonal space, almost complete intersection.}
\subjclass[2010]{ 13H05, 13H10, 15A63, 15A66, 16W22.}

\begin{abstract}
We analyze the structure of spinor coordinates on resolutions of Gorenstein ideals of codimension four. As an application we produce a family of such ideals with seven 
generators which are not specializations of Kustin-Miller model.	
\keywords{Gorenstein ring\and spinor coordinates\and spinor structures\and Clifford algebra\and orthogonal space\and almost complete intersection
 }
\end{abstract}

\maketitle

\section{Introduction}
\label{intro}
In this paper we investigate spinor structures on free resolutions of Gorenstein ideals of codimension 4.
Such structures were first described in \cite{R}. 

Let $R$ be a Gorenstein local ring in which $2$ is a unit and let $I\subset R$ be a Gorenstein ideal of codimension 4.  Let 
		\begin{equation}\label{eq1}
		\mathbb{F}: 0\rightarrow{}F_4\xrightarrow{{\bs d}_4} F_3 \xrightarrow{{\bs d}_3} F_2\xrightarrow{{\bs d}_2} F_1\xrightarrow{{\bs d}_1} R
		\end{equation}
		be a minimal free resolution of $R/I$. For the minimal free resolution $\mathbb F$ of a Gorenstein ideal $I$ of codimension 4, 
there is a quadratic form on $F_2$ of $\mathbb F$; see \cite[Theorem 2.4]{KM2}. It is also shown that this quadratic form on $F_2$ can be reduced to a hyperbolic form; see \cite[Theorem 5.3]{KM2}.

In this setting, we show in our main result (see Theorem \ref{mainthm}) that there is a spinor structure on $\mathbb F$. We give an explicit relation between spinor coordinates of $\text{Im}(d_3)$ and the Buchsbaum-Eisenbud multipliers. In particular, the spinor coordinates are square roots of some special Buchsbaum-Eisenbud multipliers; see Remark \ref{rmkmainthm}.

We calculate the spinor coordinates for some well-known examples of Gorenstein ideals of codimension 4 with few generators; see section~\ref{sec:examples}. For ideals with $7$ generators, Kustin and Miller constructed a family of ideals associated to a $3\times 4$ matrix, a $4$-vector, and a variable. This family is also known as the Kustin-Miller Model (KMM); see \cite{KM1,KM3}. We discuss generic doubling of almost complete intersection of codimension 3 which leads to a specialization of KMM; see Subsection~\ref{genericdoublingaci}. 

In \cite[page 29]{R}, Reid asks if every case of $7\times 12$ resolution is the known KMM. We construct a new family in Subsection \ref{newfamily}: a ``doubling" of perfect ideal with $5$ generators, of Cohen-Macaulay type $2$, described in \cite{CLWK}. As an application, using spinor coordinates, we show that this new family is not a specialization of Kustin-Miller family; see Theorem~\ref{notspofKM}.  This answers Reid's question.

Our calculations uncover a nice structure of Buchsbaum-Eisenbud multipliers and also reveal an interesting pattern.  Assume we look at a resolution of a Gorenstein ideal $I$ of codimension 4 in a local ring $(R, \mathfrak{m})$. In all the examples we know, the spinor coordinates belong to the ideal $I$; see Remark~\ref{rmk3}. Furthermore, for all known examples of ideals $I$ with 6, 7, and 8 generators, some spinor coordinates are not in ${\mathfrak m} I$, and hence they can be taken as minimal generators of $I$.
However, for 9 or more generators, we find an example when all spinor coordinates are in ${\mathfrak m}I$. This suggests that Gorenstein ideals of codimension 4 with up to 8 generators are easier to classify than those with more than 8 generators; see Remark~\ref{finalrmk}.

The paper is organized as follows.  As the intended audience are commutative algebraists, we include an extended section \ref{sec:rep} on representations of general linear groups and special orthogonal groups.
In Subsection \ref{sec:ortho}, while working with orthogonal spin groups, we first deal with characteristic zero case, and then indicate which results stay true in arbitrary characteristic different from $2$. We also include subsection \ref{isotropic} to explain the connection of spinor structure to the geometry of isotropic Grassmannians.
In Section \ref{sec:background} we recall the Buchsbaum-Eisenbud First Structure Theorem  for finite  free resolutions and the results of Kustin-Miller on the resolutions of Gorenstein ideals of codimension 4.

In Section \ref{sec:spinor} we prove the existence of spinor structures on resolutions of Gorenstein ideals of codimension 4. We also apply results from section \ref{sec:p} to explicitly calculate the relation between the Buchsbaum-Eisenbud multipliers and the spinor coordinates.

Section \ref{sec:examples} contains the computations of the spinor coordinates for complete intersections, for hyperplane sections of codimension 3 Gorenstein ideals, and for KMM with 7 generators.

In Section \ref{sec:doublingsci}  we analyze the resolutions of ``doublings" of almost complete intersection ideals of codimension 3.  We show that these ideals are closely related
to the KMM model. Finally, in Section \ref{(1,5,6,2)D}, we show that the Gorenstein ideals of codimension 4 that are doublings of perfect ideals of codimension 3 with 5 generators of Cohen-Macaulay type 2 found in \cite{CLWK}  are not specializations of the KMM.

\section{Background in representation theory}\label{sec:rep}

\subsection{Representation Theory of $\mathbf {GL({V})}$}

Let $V$ be a vector space over a field $K$ (or a free module over a ring $R$).
 We will use the following notation for the representations of the group $GL_n(V)$. For the dominant integral weight $(a_1,\cdots,a_n)$ where $a_i\in \mathbb{Z}$ and  $a_1\geq a_2\geq \cdots\geq a_n$,  $S_{(a_1,\cdots,a_n)}{V}$ denotes the corresponding Schur module. We denote the Lie algebras of $GL(V)$ and $SL(V)$ as ${\underline gl}(V)$ and ${\underline{ sl}}(V)$, respectively.
 The main reference we will be using is the book \cite{FH}, lecture $6$.

\subsection{Preliminaries on representations of the spin groups.}\label{sec:ortho}

In this section we collect the material involving representation theory of the spin group. We will work over a field $K$ of characteristic different from $2$.
However, for the convenience of the reader, we first recall the basic facts over a field of characteristic zero, and then indicate what needs to be modified in finite characteristics different than $2$. 

\subsubsection{Representations of the spin group over an algebraically closed field of characteristic zero}
Most of the material in this section can be found in the book of Fulton and Harris \cite{FH}, in lectures 18--20.
Other references are \cite[Chapters 2,3,6]{GW}, \cite[Section 2.15]{Jen}, and \cite[Chapter 2]{DP}.
Let $K$ be an algebraically closed field of characteristic zero and let $V$ be an orthogonal space of dimension $2m$ over $K$. We put the symmetric bilinear map $Q:V\otimes V\rightarrow K$ in the hyperbolic form. More precisely, let $W$ be an isotropic space in $V$ of dimension $m$. We can identify $V$ with $W\oplus W^*$ and the symmetric bilinear map $Q$ with the duality
$$Q:W\otimes W^*\rightarrow K,$$
 also requiring $W$ and $W^*$ being isotropic.  

Throughout we deal with the representations of the special orthogonal Lie algebra ${\underline so}(V)$, as it is well known that the categories of rational representations of the spin group Spin$(2m)$ and of ${\underline so}(V)$ are equivalent.
The maximal toral subalgebra in the Lie algebra ${\underline so}(V)$ is the maximal toral subalgebra of diagonal matrices in ${\underline gl}(W)$. It consists of matrices
$$\left(\begin{matrix} A&0\\0&-A\end{matrix}\right)$$
where $A$ is an $m\times m$ diagonal matrix. We denote the basis of $V$ as follows. Vectors $\lbrace e_1,\ldots ,e_m\rbrace$ form a basis of $W$, and $\lbrace e_{-1}=e_1^*,\ldots ,e_{-m}=e_m^*\rbrace$ form the dual basis in $W^*$. Their weights are $\varepsilon_i$ and $-\varepsilon_i$ for $1\le i\le m$, respectively.

Since  the symmetric bilinear map $Q$ is in standard form in any characteristics different from $2$, we can use representation of Spin($2m$) as well. For the representation of the spin group, we use the following notation.  
In this case, a maximal torus $T$ of Spin($2m$) and the Lie algebra of $T$, denoted $\mathfrak{t}$, are 
$$T=\{{\rm diag}[x_1,\cdots,x_m,x_{m}^{-1},\cdots,x_1^{-1}]: x_i\in K\setminus \{0\}\},$$ 
$$\mathfrak{t}=\{{\rm diag}[a_1,\cdots,a_m,-a_{m},\cdots,-a_1]: a_i\in K\}.$$
For $i=1,\ldots,m$, define $\langle {\varepsilon_i},{D}\rangle=a_i$, where ${D}={\rm diag}[a_1,\cdots,a_m,-a_{m},\cdots,-a_1]$ is in $ \mathfrak{t}$. Then $\{{\varepsilon_1},\ldots,{\varepsilon_{m}}\}$ is a basis for $\mathfrak{t}^*$.
All representations of Spin$(2m)$ restrict to $T$ so they decompose to weights with respect to $T$. The simple roots are $\varepsilon_i-\varepsilon_{i+1}$ for $i<m$ with $\varepsilon_{m-1}+\varepsilon_m$.


Let us recall that when $K$ is algebraically closed of characteristic zero, then  irreducible representations of ${\underline so}(V)$ are parametrized by dominant integral weights 
$$\lambda =\sum_{i-1}^m\lambda_i\omega_i,$$
where $${\omega}_{i}={\varepsilon}_1+\cdots+{\varepsilon}_{i}$$ for $1\leq i\leq m-2$, $ { \omega_{m-1}}=\frac{1}{2}({\varepsilon}_1+\cdots+{\varepsilon}_{m-1}+{\varepsilon}_{m})$, and ${\omega}_m=\frac{1}{2}({\varepsilon}_1+\cdots+{\varepsilon}_{m-1}-{\varepsilon}_{m})$ are so-called fundamental weights, and $\lambda_i\in{\bf Z}_{\ge 0}$. 

We denote $V(\lambda)$ the irreducible representation corresponding to the highest weight $\lambda$.
The fundamental representations of Spin$(2m)$ are $V(\omega_i)=\bigwedge^i V$ for $1\le i\le m-2$, and the fundamental representations are the half-spinor representations for $i=m-1$ and $i=m$.
To define them, we need a Clifford algebra 
$$C(V, Q)=T(V)/I(V),$$
where $T(V)$ is a tensor algebra of $V$ and $I(V)$ is the two-sided ideal in $T(V)$ generated by the elements
$$v_1\otimes v_2+v_2\otimes v_1-2Q(v_1, v_2)$$
for $v_1, v_2\in V$.
Note that since the ideal $I(V)$ has generators with components in the $0$-th and $2$-nd graded component of $T(V)$, the Clifford algebra decomposes to its even part $C(V)_+$ and its odd part $C(V)_-$. Additively we have decompositions
\begin{align*}C(V)_+&=\oplus_{i\ \rm{even}}\bigwedge^i V, \\
 C(V)_-&=\oplus_{i\ \rm{odd}}\bigwedge^iV.
 \end{align*}
Let $f=e_{1}\wedge\ldots \wedge e_{m}$. 

We have the following result (see \cite{FH}, lecture 20 for more details, note that our convention interchanges $W$ and $W^*$).

\begin{proposition} The left ideal
$$S= C(Q)^.f$$
is additively isomorphic to the exterior algebra $\bigwedge^\bullet W^*$.
It is therefore a representation of ${\underline so}(V)$. It decomposes to even and odd parts $S_+:=\bigwedge^{even}W^*$ and $S_-:=\bigwedge^{odd}W^*$.
$S$ is called a Clifford module, and $S_+$ and $S_-$ are called half-spinor modules.
\end{proposition}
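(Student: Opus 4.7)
The plan is to construct an explicit $C(V)$-module isomorphism $\phi:\bigwedge^\bullet W^*\to S$, which simultaneously gives the additive identification and the decomposition into even and odd parts. First I would equip $\bigwedge^\bullet W^*$ with the natural $C(V)$-module structure: $\alpha\in W^*$ acts by exterior multiplication $\omega\mapsto\alpha\wedge\omega$, and $w\in W$ acts by contraction $\omega\mapsto\iota_w(\omega)$ using the duality pairing $Q$. Verifying the Clifford relation on this space reduces (after the usual normalization) to the standard identity $\iota_w(\alpha\wedge\omega)+\alpha\wedge\iota_w(\omega)=\alpha(w)\omega$ together with the fact that both wedging by $W^*$-elements and contracting along $W$-elements anticommute among themselves, reflecting the isotropy of $W$ and $W^*$.

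Next I would establish the key computation $e_i\cdot f=0$ in $C(V)$ for every $i=1,\ldots,m$. Since $W$ is isotropic, $e_i^2=0$ and $e_ie_j=-e_je_i$ for $i\ne j$ in $W$; moving $e_i$ into its natural place in the Clifford product $f=e_1\cdots e_m$ produces a factor $e_i^2=0$. Consequently the left ideal $C(V)\cdot W$ is contained in the left annihilator of $f$. Define $\phi$ by $\phi(\alpha_1\wedge\cdots\wedge\alpha_k)=\alpha_1\cdots\alpha_k\cdot f$, where the product on the right is the Clifford product; this is well defined since $W^*$ is isotropic, so Clifford products of $W^*$-elements anticommute exactly as wedges do. Using a PBW basis $\{e_J^*\,e_I\}$ of $C(V)$ together with $e_I\cdot f=0$ for $I\ne\emptyset$, every element of $S=C(V)\cdot f$ collapses to a sum of terms $e_J^*\cdot f=\phi(e_J^*)$, which gives surjectivity of $\phi$.

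For injectivity I would exploit the module structure on $\bigwedge^\bullet W^*$ from the first step. If $\tilde\omega\cdot f=0$ in $C(V)$, then $\tilde\omega\cdot f$ acts as zero on every $C(V)$-module; applying it to the top form $\omega_0=e_1^*\wedge\cdots\wedge e_m^*$ and using $f\cdot\omega_0=\iota_{e_1}\cdots\iota_{e_m}(\omega_0)=\pm 1$, one obtains $0=\tilde\omega\cdot(\pm 1)=\pm\omega$, forcing $\omega=0$. This identifies $S$ additively with $\bigwedge^\bullet W^*$. The decomposition $S=S_+\oplus S_-$ then follows from the $\mathbb{Z}/2$-grading of $C(V)$: an element $\tilde\omega\cdot f$ has parity $|\omega|+m\bmod 2$, matching (up to a swap when $m$ is odd) the even/odd decomposition of $\bigwedge^\bullet W^*$.

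The main technical point is verifying in the first step that the proposed operators actually assemble into a representation of $C(V)$ in the paper's convention; this is routine but requires care with the factor of $2$ and signs. A minor subtlety is matching conventions so that $S_+$ and $S_-$ correspond to the even and odd exterior powers of $W^*$ as stated, particularly when $m$ is odd.
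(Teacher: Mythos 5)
The paper does not actually prove this proposition; it states it with a citation to Fulton--Harris, Lecture~20 (with $W$ and $W^*$ interchanged), and your argument is the standard one behind that citation: realize $\bigwedge^\bullet W^*$ as a Clifford module by wedge and (scaled) contraction, use $e_i\cdot f=0$ to see that $S$ is spanned by the elements $e_J^*\cdot f$, and evaluate on the top form to get independence. Your proof is correct; the one detail worth making explicit rather than calling ``routine'' is that with the paper's Clifford convention $v_1v_2+v_2v_1=2Q(v_1,v_2)$, the action of $w\in W$ must be $2\,\mathfrak{i}(w)$ rather than $\mathfrak{i}(w)$, since $\gamma=\mathfrak{i}+\epsilon$ alone only yields $\gamma(v)^2=\tfrac{1}{2}Q(v,v)$ and hence is not a $C(V,Q)$-module map without this rescaling.
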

We also have
\begin{align*}V(\omega_{m-1})&=S_+ \\ 
V(\omega_m )&=S_-.
\end{align*}

Both half-spinor representations have dimension $2^{m-1}$. 
Let $\mathcal{L}\subset [1,m]$ be a subset, let $\mathcal{L}^c$ be its complement. We  denote by $u_\mathcal{L}$ a coset of the tensor $\wedge_{i\in \mathcal{L}} w_{-i}$. This is a weight vector of weight
$-{1\over 2} (\sum_{i\in \mathcal{L}}\varepsilon_i +\sum_{i\in \mathcal{L}^c}\varepsilon_i)$.

Note that Clifford algebra is then identified with ${\rm End}_{C(Q)}(S)$ and the spin group appears as certain subset of invertible elements of $C(Q)$. We do not need this description, so
we refer the reader to \cite{FH}, lecture $20$.

For the convenience of the reader we describe the action of the Lie algebra ${\underline so}(V)$ on half-spinor representations. Strictly speaking, it will not be needed but it explains weight decompositions of half-spinor representations.

For ${a},{b}\in {V}$, define $R_{{a},{b}}\in {\rm End}({V})$ as $R_{{a},{b}}{v}=Q({b},{v}) {a}-Q ({a},{v}) {b}$. By \cite[Section 2.4]{DP}, $R_{{a},{b}}$ spans ${\underline so}(V)$ for ${a},{b}\in {V}$. Then $R_{{e}_i,{e}_j}={e}_{-i,j}-{e}_{-j,i}$ where ${e}_{i,j}$ be an elementary transformation on ${V}$ that carries ${e}_i$ to ${e}_j$ and others to 0.  

For ${y}^*\in {W}^*$, the exterior product $\bs \epsilon({y}^*)$ and the interior product operator $\mathfrak{i}({y})$ on $\bigwedge {W}$ are defined as $\bs \epsilon({ y}^*){x}^*={y}^*\wedge {x}^*$ and  
$$\mathfrak{i}({y})({y}_1^*\wedge \cdots \wedge  {y}_k^*)=\sum\limits_{j=1}^{k}(-1)^{j-1}Q( {y},{y}_j^*) {y}_1^*\wedge\cdots\wedge \widehat{{y}_j^*}\wedge \cdots \wedge {y}_k^*,$$
where ${y}_i^*\in {W}^*$, ${x}^*\in \bigwedge {W}^*$ and $\widehat{{y}_j^*}$ means to omit ${y}_j^*$. 

Define linear maps $\bs \gamma: {V}\rightarrow \text{End}(\bigwedge {W}^*)$ as $\bs \gamma({y}+{y}^*)=\mathfrak{i}({y})+\bs \epsilon({y}^*)$ for ${y}\in W$ and ${y}^*\in {W}^*$, and $\bs \varphi: {\underline so}(V)\rightarrow {\rm Cliff}_2({V},Q)$ as $\bs \varphi(R_{{a},{b}})=\frac{1}{2}[\bs \gamma({a}),\bs \gamma({b})]$ for ${a},{b}\in {V}$ where $[\bs \gamma({a}),\bs \gamma({b})]=\bs \gamma({a})\bs \gamma({b})-\bs \gamma({b})\bs \gamma({a})$. By \cite[Chapter 2]{DP}, $\bs\varphi$ is injective, and the Lie algebra of Spin(${V}$) is $\bs \varphi({\underline so}(V))$.

Let us also look at other exterior powers of $V$. We have
$$\bigwedge^{m-1}V=V(\omega_{m-1}+\omega_m),$$
$$\bigwedge^m V=V(2\omega_{m-1})\oplus V(2\omega_m).$$

To see the decomposition in the second formula, we proceed as follows.
Let ${\tilde Q}:V\rightarrow V^*$ be an ${\underline so}(V)$-equivariant isomorphism defined by the formula
$${\tilde Q}(v_1)(v_2):=Q(v_1, v_2).$$
This isomorphism defines a similar ${\underline so}(V)$-equivariant isomorphism
$$\bigwedge^m{\tilde Q}:\bigwedge^m V\rightarrow\bigwedge^m V^*.$$
We also have an ${\underline sl}(V)$-equivariant isomorphism
$$\phi:\bigwedge^m V^*\rightarrow\bigwedge^m V,$$
using $e_1^*\wedge\ldots\wedge e_m^*\wedge e_1\wedge\ldots\wedge e_m$  as a volume form.
We define an ${\underline so}(V)$-equivariant isomorphism
$$\tau=\phi\circ (\bigwedge^m {\tilde Q}):\bigwedge^m V\rightarrow\bigwedge^m V.$$
One proves easily that $\tau^2=1$. The representation $V(2\omega_{m-1})$ can be identified with the $1$-eigenspace of $\tau$ and
$V(2\omega_{m})$ can be identified with the $-1$-eigenspace of $\tau$. Thus the operators ${1\over 2}(\tau-1)$ and ${1\over 2}(\tau+1)$
are the projections on both direct summands.

Let us also mention the tensor product decompositions that will be useful. They go back at least to 1967 Cartan's book \cite{C}, but, for our purposes, we refer to \cite{A} and \cite{DP}.

\begin{proposition}\label{prop:tensor} \cite[Theorem 4.6]{A} Let $K$ be an algebraically closed field of characteristic zero.\\
\begin{enumerate}
\item $$\bigwedge^2 V(\omega_1)=V(\omega_2)$$
 \item $$S_2 V (\omega_1)=V(2\omega_1)\oplus K$$
\item $$\bigwedge^2 V(\omega_{m-1})=\oplus_i V(\omega_{m-2-4i})$$
\item $$S_2 V (\omega_{m-1})=V(2\omega_{m-1})\oplus\oplus_i V(\omega_{m-4i})$$
\item $$\bigwedge^2 V(\omega_m)=\oplus_i V(\omega_{m-2-4i})$$
\item $$S_2 V (\omega_m)=V(2\omega_m)\oplus\oplus_i V(\omega_{m-4i})$$
\end{enumerate}
with the convention that $V(\omega_0)=K$.

\end{proposition}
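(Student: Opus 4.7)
The plan is to dispatch items (1) and (2) from classical facts about the defining representation $V(\omega_1)=V$, and to prove (3)--(6) using the Clifford realization of the half-spin modules.

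For (1), I would identify $\bigwedge^{2}V$ with the adjoint module $\underline{so}(V)$ via $a\wedge b\mapsto R_{a,b}$; the latter is irreducible of highest weight $\varepsilon_{1}+\varepsilon_{2}=\omega_{2}$. For (2), the invariant form $Q$ embeds $K$ into $S_{2}V$, and the quotient is irreducible of highest weight $2\varepsilon_{1}=2\omega_{1}$; a dimension count confirms that no further summand can occur.

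For (3)--(6), the starting point is the $\underline{so}(V)$-equivariant isomorphism coming from the Clifford algebra:
\[
S\otimes S\;\cong\;C(V,Q)\;\cong\;\bigwedge\nolimits^{\bullet}V,
\]
where the first isomorphism combines $C(V,Q)\cong\operatorname{End}_{K}(S)$ with a $\operatorname{Spin}(V)$-invariant pairing $S\otimes S\to K$ to identify $S^{*}$ with $S$, and the second is the standard associated-graded identification on the Clifford filtration. The parity grading on $C(V,Q)$ refines this to
\[
S_{+}\otimes S_{+}\oplus S_{-}\otimes S_{-}\;\cong\;\bigwedge\nolimits^{\mathrm{even}}V,\qquad S_{+}\otimes S_{-}\oplus S_{-}\otimes S_{+}\;\cong\;\bigwedge\nolimits^{\mathrm{odd}}V.
\]
Comparing weights against the explicit basis $u_{\mathcal{L}}$ of $S$ shows that $\bigwedge^{k}V$ contributes to $S_{\pm}\otimes S_{\pm}$ exactly when $k\equiv m\pmod{2}$, confining the admissible $k$ to the sequence $m,\,m-2,\,m-4,\dots$. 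I would then separate symmetric and antisymmetric parts by invoking the symmetry type of the spin pairing on $S_{\pm}$, which alternates with $m\bmod 4$: this places $\bigwedge^{k}V$ with $k\equiv m\pmod{4}$ inside $S_{2}V(\omega_{m-1})$ and with $k\equiv m-2\pmod{4}$ inside $\bigwedge^{2}V(\omega_{m-1})$, and symmetrically for $V(\omega_m)$. Translating via $\bigwedge^{i}V=V(\omega_{i})$ for $i\leq m-2$ together with $\bigwedge^{m}V=V(2\omega_{m-1})\oplus V(2\omega_{m})$ (where $V(2\omega_{m-1})$ lands in $S_{2}V(\omega_{m-1})$ and $V(2\omega_{m})$ in $S_{2}V(\omega_{m})$ by matching highest weights), the formulas (3)--(6) drop out.

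The hardest step is the $m\bmod 4$ bookkeeping that determines the symmetry type of the spin pairing, and the related task of confirming which of $V(2\omega_{m-1}),V(2\omega_{m})$ sits in $S_{2}V(\omega_{m-1})$ rather than in $\bigwedge^{2}V(\omega_{m-1})$. A cleaner but more computational alternative is to bypass this case analysis entirely by verifying (3)--(6) through the Weyl character formula: the characters of the half-spin representations are explicit sums over sign choices $(\pm\tfrac{1}{2}\varepsilon_1,\dots,\pm\tfrac{1}{2}\varepsilon_m)$, and the desired identities among formal characters can be checked uniformly in $m$, with Schur's lemma then matching the irreducible summands.
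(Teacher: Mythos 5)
The paper does not actually prove this proposition; it is cited to Adams \cite[Theorem 4.6]{A}, and it also appears in the other references listed (e.g.\ Deligne's notes \cite{DP} and Fulton--Harris \cite{FH}). Your sketch is therefore supplying a proof the paper omits, and the strategy is sound and close to the standard treatment in those sources: use the Clifford-module isomorphism $S\otimes S^{*}\cong C(V,Q)\cong\bigwedge^{\bullet}V$, refine it by the parity grading, and separate symmetric from antisymmetric parts. Items (1) and (2) via $\bigwedge^{2}V\cong{\underline so}(V)$ and the invariant quadratic form are standard and correct, as is the identification $\bigwedge^{m}V=V(2\omega_{m-1})\oplus V(2\omega_{m})$ with the two summands matched to $S_{2}$ of the respective half-spin modules by highest weight.

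One simplification worth recording: the $m\bmod 4$ case analysis you flag as the hard step can be sidestepped entirely. The mechanism that alternates symmetry types across consecutive $\bigwedge^{k}V$-summands of $S_{\pm}\otimes S_{\pm}$ is the Clifford anti-automorphism (order reversal), which acts on $\bigwedge^{k}V$ by the scalar $(-1)^{k(k-1)/2}$; since $k$ drops by $2$ between consecutive summands, the symmetry type flips at each step. It therefore suffices to anchor the alternation at a single value of $k$, and the top one is free: the weight $2\omega_{m-1}$ occurs with multiplicity one in $S_{2}V(\omega_{m-1})$ and not at all in $\bigwedge^{2}V(\omega_{m-1})$, so $V(2\omega_{m-1})\subset\bigwedge^{m}V$ must land in $S_{2}$. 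The full pattern $\bigwedge^{m-4i}V\subset S_{2}$ and $\bigwedge^{m-2-4i}V\subset\bigwedge^{2}$ then follows uniformly, with no dependence on $m\bmod 4$; the symmetry type of the pairing $S_{+}\otimes S_{+}\to K$ is then \emph{recovered} (not assumed) by reading off whether $\bigwedge^{0}V$ sits in $S_{2}$ or in $\bigwedge^{2}$. Your Weyl-character alternative is also perfectly serviceable and arguably the fastest path if one is willing to manipulate sums over sign choices.
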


\subsubsection{Arbitrary characteristic} 

Let $K$ be algebraically closed field of characteristic different than $2$. We work with the symmetric bilinear form $Q$ which is already in hyperbolic form, so we can set again
$V=W\oplus W^*$ and use the basis $\lbrace e_1,\ldots ,e_m, e_{-1},\ldots ,e_{-m}\rbrace$.
The  representations $V(\omega_i)$ for $1\le i\le m-2$ can be constructed over $K$. They might not be irreducible but this will not be relevant.
The construction of half-spinor representations also can be carried out over $K$.
Among the tensor product decompositions that were listed the most important are the ones for the symmetric powers of half-spinor representations; see \cite{DP}.

The decomposition of $\bigwedge^m V$ into two summands of equal dimension as described in previous section is also true over $K$. The summands might not be irreducible, but this will not be relevant.

The constructions of fundamental representations $\bigwedge^i V$, $V(\omega_{m-1})$, and $V(\omega_m)$ are also valid over any commutative ring $R$ such that $2$ is invertible
in $R$. One assumes that $V$ is a free orthogonal module, i.e. $V$ has a symmetric bilinear form which in some basis is hyperbolic. Let us state the definition.

\begin{definition}\label{def1}
		Let $V$ be a finitely generated free $R$-module. A bilinear map $Q : V\otimes_R V\rightarrow R$  is called a non-degenerate pairing if it is a symmetric bilinear map such that the induced map $ {\tilde Q}:V\rightarrow V^*$ defined by $f\mapsto  Q(-,f)$ is an isomorphism. A non-degenerate pairing  is
		in standard form if ${\rm rank}(V)$ is even and we can write it as a direct sum of hyperbolic two-dimensional pairings with matrices of the form
		$$\begin{bmatrix}0&1\\ 1&0\end{bmatrix}.$$
		In this case, $V$ is also called an even orthogonal $R$-module and the corresponding basis of $V$ is called a hyperbolic basis.
\end{definition}

\subsubsection{Isotropic Grassmannians} \label{isotropic}

Let $P_m$ and $P_{m-1}$ denote maximal parabolic subgroups corresponding to the simple roots $\varepsilon_{m-1}-\varepsilon_{m}$ and $\varepsilon_{m-1}+\varepsilon_{m}$, respectively.

The spinor coordinates are associated to the homogeneous spaces Spin($2m)/P_{m-1}$ and Spin$(2m)/P_m$ which are connected components of the isotropic Grassmannian IGrass$(m, V)$. These spaces are closed subvarieties of the projective spaces ${\bf P}(V(\omega_{m-1}))$ and ${\bf P}(V(\omega_m))$, respectively.

The Pl\"ucker embeddings are the doubles of these fundamental embeddings. The equations defining the subvariety Spin$(2m)/P_{m-1}$ inside of ${\bf P}(V(\omega_{m-1}))$
(respectively Spin$(2m)/P_{m}$ inside of ${\bf P}(V(\omega_{m}))$) are quadratic. They are well known in commutative algebra, see  \cite{CM}.

In order to make everything explicit, let us choose a hyperbolic basis in $V$: $$\lbrace e_1,\ldots ,e_m, e_{-1},\ldots ,e_{-m}\rbrace.$$  Consider a subspace $U$ in IGrass$(m, V)$ whose Pl\"ucker coordinate corresponding to
$e_1,\ldots ,e_m$ is nonzero (this contains a choice of connected component of IGrass$(m, V)$ in which $U$ is contained). Then the subspace $U$ has a unique basis whose expansions in our hyperbolic basis give rows of a matrix
$$M=\left(\begin{matrix}J&X\end{matrix}\right)=\left(\begin{matrix} 0&0&\ldots&0&1&x_{1,1}&x_{1,2}&\ldots &x_{1,m}\\
0&0&\ldots&1&0&x_{2,1}&x_{2,2}&\ldots &x_{2,m}\\
\ldots&\ldots&\ldots&\ldots&\ldots&\ldots&\ldots&\ldots&\ldots\\
1&0&\ldots&0&0&x_{m,1}&x_{m,2}&\ldots &x_{m,m}
\end{matrix}\right)$$

The subspace $U$ is isotropic which makes the matrix $X$ skew-symmetric. So the big cell $Z$  in Spin$(2m)/P_{m-1}$ is isomorphic to the space of skew-symmetric matrices.
The Pl\"ucker coordinates restrict on $Z$ to the maximal minors of the matrix $M$, and the spinor coordinates restrict to Pfaffians of all sizes of all submatrices of $X$ which are themselves skewsymmetric. Each such Pfaffian can be described by even number of columns of $X$ (taking also the same rows), and it is the spinor coordinate (i.e. weight vector in $V(\omega_{m-1})$) whose weight has minus signs precisely at these places. This includes identity which corresponds to empty subset of columns.

So the quadratic relations defining our homogeneous space are the quadratic identities among Pfaffians of all sizes of the matrix $X$.
All such relations are written in \cite{CM}. See also \cite{KU} section $3$ for a similar set of relations.

\begin{example} Let us write explicitly the case of $n=5$ where we have $10$ quadratic relations, going back to Cartan; see \cite{C}. Let $\mathcal{I}\subset [1,m]$ be a subset of even cardinality. We write $pf(\mathcal{I})$ for the Pfaffian of a submatrix of $X$
on the rows and columns from $\mathcal{I}$. The Cartan equations are
$$pf(\emptyset)pf(1234)-pf(12)pf(34)+pf(13)pf(24)-pf(14)pf(23)$$
$$pf(12)pf(1345)-pf(13)pf(1245)+pf(14)pf(1235)-pf(15)pf(1234)$$
and eight others which we get by permuting the numbers $1,2,3,4,5$.
\end{example}

This whole construction is characteristic free.
In degree $2$ we see that $V(2\omega_{m-1})$ is a factor of $S_2(V(\omega_{m-1}))$ by the span of these quadratic equations. This is an analogue of the decomposition of $S_2 (V(\omega_{m-1}))$ given in \ref{prop:tensor}  in positive characteristic.

There is another set of interesting relations related to this situation. Since Pl\"ucker embedding is a double of a fundamental embedding, each minor of $X$ has a quadratic expression in terms of its Pfaffians. Such expressions are also known (see \cite{Kn} and references there).

The equivariant map $\p$ constructed in the next section allows to see these relations in the context of representations of the spin group (see Remark \ref{rmk3} following the proof of the main theorem).

\subsection{Certain Spin($2m$)-equivariant map $\p$ and its properties}\label{sec:p}

Let $V$ be an orthogonal space of rank $2m$ over algebraically closed field $K$ of characteristics different from 2. Our goal in this section is to describe certain equivariant map $$\p:S_2(V(\omega_m))\rightarrow \bigwedge \limits^m V$$ explicitly.  If $K$ has characteristic zero, then, by formula \ref{prop:tensor}, we have a unique such Spin($2m$)-equivariant map $\p$ up to scalar.  Over fields $K$ of characteristic different from $2$ one can check that formulas we write down below define an equivariant map, so we will just use it.

\begin{remark} The  map $\p$ will be very important in our application as it will give polynomial formula expressing arbitrary Buchsbaum-Eisenbud multipliers by quadratic expressions involving spinor coordinates.
\end{remark}

Before we start we need some notation.
The signature of a permutation of the set $[1,m]$, denoted by sgn, is a multiplicative map from the group of permutations $S_m$ to $\pm 1$. Permutations with signature +$1$ are even and those with sign -$1$ are odd. Also $\mathcal{L}^c$ denotes the complement of a subset $\mathcal{L}$ of $[1, m]$.


\begin{lemma}\label{lem:mapp} {
		Set $q=\floor*{\frac{m}{2}}$. There is an equivariant map	$\p:S_2({V}(\omega_m))\rightarrow \bigwedge \limits^m {V}$ which is defined as
\begin{align}\label{eqn}
		\p({u}_{\mathcal{J}_{2k}}{u}_{ \phi})=\dfrac{1}{2^{\ell(\mathcal{J}_{2k})-1}}\sum\limits_{\mathcal{L}\subset \mathcal{J}_{2k},\ell(\mathcal{J}_{2k})=2\ell(\mathcal{L})}{\rm sgn}(\mathcal{J}_{2k},\mathcal{L}) {e}_{-\mathcal{L}}\wedge {e}_{\mathcal{J}_{2k}^{c}}\wedge {e}_\mathcal{L}
		\end{align}
such that $\p({u}_{ \phi}{u}_{\phi})={e}_1\wedge {e}_2\wedge\cdots \wedge {e}_m$. Here $\mathcal{J}_{2k}=\{\gamma_1,\ldots,\gamma_{2k}\}$ with $1\leq \gamma_1< \cdots < \gamma_{2k}\leq m$, $1\leq k\leq q$,  ${e}_{-\mathcal{L}}=\bigwedge\limits_{i\in \mathcal{L}} {e}_{-l}$, ${u}_{\mathcal{J} _{2k}}={e}_{-\gamma_1}\wedge {e}_{-\gamma_2}\wedge\cdots\wedge {e}_{-\gamma_{2k}}$, $1\leq k\leq q$, ${e}_\mathcal{L}=\bigwedge\limits_{i\in \mathcal{L}} {e}_{l}$, {\rm sgn}$(\mathcal{J}_{2k},\mathcal{L})$ is the signature of permutations of $\mathcal{J}_{2k}$, and $\ell(\mathcal{J})$ is the length of any indexing set $\mathcal{J}\subset [1,m]$.}

\end{lemma}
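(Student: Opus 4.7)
The plan is to follow the division suggested in the lemma itself: first, in characteristic zero, invoke a Schur-lemma uniqueness statement to reduce the problem to exhibiting any nonzero equivariant formula and fixing the scalar; then verify the explicit formula (which makes sense whenever $2$ is invertible) by a direct intertwining check with the Lie algebra $\underline{so}(V)$.

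\emph{Step 1 (uniqueness in characteristic zero).} I would begin with the two decompositions
$$\bigwedge^m V = V(2\omega_{m-1}) \oplus V(2\omega_m), \qquad S_2\bigl(V(\omega_m)\bigr) = V(2\omega_m) \oplus \bigoplus_i V(\omega_{m-4i}),$$
the first recalled in Section \ref{sec:ortho}, the second given by Proposition \ref{prop:tensor}(6). Since the summands $V(\omega_{m-4i})$ differ from both $V(2\omega_{m-1})$ and $V(2\omega_m)$, the only shared isotypic piece is $V(2\omega_m)$. By Schur's lemma, $\mathrm{Hom}_{\mathrm{Spin}(2m)}\bigl(S_2(V(\omega_m)), \bigwedge^m V\bigr)$ is one-dimensional, spanned by the composite $S_2(V(\omega_m)) \twoheadrightarrow V(2\omega_m) \hookrightarrow \bigwedge^m V$. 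Consequently, to settle the characteristic-zero case it is enough to exhibit any nonzero equivariant formula and to fix its scaling, which the normalization $\p(u_\phi u_\phi) = e_1\wedge\cdots\wedge e_m$ accomplishes.

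\emph{Step 2 (equivariance of the formula).} By the description recalled in Section \ref{sec:ortho}, $\underline{so}(V)$ is spanned by operators $R_{a,b}$ with $a,b$ in the hyperbolic basis $\{e_{\pm i}\}$; these fall into three families according to whether $(a,b) \in W\times W$, $W\times W^*$, or $W^*\times W^*$. On the half-spinor model $\bigwedge^\bullet W^*$ each acts via $\bs\varphi(R_{a,b}) = \tfrac{1}{2}[\bs\gamma(a),\bs\gamma(b)]$, i.e.\ as a combination of interior products $\mathfrak{i}(e_i)$ and exterior products $\bs\epsilon(e_{-j})$; this extends to $S_2(V(\omega_m))$ by the Leibniz rule and acts on $\bigwedge^m V$ as a derivation. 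For each generator $R_{a,b}$ one then expands
$$\p\bigl(R_{a,b}u_{\mathcal{J}_{2k}} \cdot u_\phi\bigr) + \p\bigl(u_{\mathcal{J}_{2k}}\cdot R_{a,b}u_\phi\bigr) \quad \text{versus} \quad R_{a,b}\cdot \p\bigl(u_{\mathcal{J}_{2k}} u_\phi\bigr)$$
as an alternating sum over size-$k$ subsets $\mathcal{L}\subset\mathcal{J}_{2k}$ and matches terms. The prefactor $1/2^{\ell(\mathcal{J}_{2k})-1}$ is precisely what is needed so that the $\tfrac{1}{2}$ in $\bs\varphi$, the symmetrization on $S_2$, and the doubling produced by the swap $\mathcal{L} \leftrightarrow \mathcal{J}_{2k}\setminus\mathcal{L}$ all cancel correctly. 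Consistency of the normalization $\p(u_\phi u_\phi)$ with the formula is then checked by applying the exterior-product operators attached to $e_{-i}\wedge e_{-j}$ to $u_\phi u_\phi$ and comparing with the value prescribed for $\mathcal{J}_2 = \{i,j\}$.

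\emph{Step 3 (arbitrary characteristic and the main obstacle).} The computation in Step 2 uses only integer coefficients divided by powers of $2$, so the formula and its equivariance pass to any $\mathbb{Z}[\tfrac{1}{2}]$-algebra; one either repeats the combinatorial check directly in that setting or reduces from characteristic zero by base change. The main obstacle is precisely the combinatorial bookkeeping in Step 2: the signs $\mathrm{sgn}(\mathcal{J}_{2k},\mathcal{L})$ and the redistribution of indices under $\mathfrak{i}$ and $\bs\epsilon$ demand care, and the appearance of the denominator $1/2^{\ell(\mathcal{J}_{2k})-1}$ is only transparent once the symmetrization on the $S_2$ side is executed carefully. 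Organizing the verification by the three families of $R_{a,b}$ makes it tractable: the $W\times W^*$-type generators permute indices inside $\mathcal{J}_{2k}$, while only the $W^*\times W^*$- and $W\times W$-type generators change the cardinality $2k$.
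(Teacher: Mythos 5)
Your outline is sound and arrives at the same result, but it takes a genuinely different route from the paper's proof. The paper also invokes the uniqueness of the equivariant map (via Proposition~\ref{prop:tensor}(6) and the decomposition of $\bigwedge^m V$, stated just before the lemma), but it then \emph{derives} the explicit formula rather than verifying it: the proof proceeds by reverse induction on $k$, taking the top weight case $k=q$ as the starting point, computing $\mathfrak{i}(e_{\gamma_i})\mathfrak{i}(e_{\gamma_j})(u_{\mathcal{J}_{2q}}u_\phi)$, and using the fact that an equivariant map intertwines the interior operators $\mathfrak{i}(e_{\gamma_i})\mathfrak{i}(e_{\gamma_j})$ with $R_{e_{\gamma_i},e_{\gamma_j}}$ (citing \cite[Lemma 6.2.1]{GW}) to propagate the formula from $k=q$ down to $k=1$ and finally to $\p(u_\phi u_\phi)=e_1\wedge\cdots\wedge e_m$. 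This only requires tracking the weight-lowering operators in the $W\times W$ family, and the normalization at $k=0$ falls out automatically. Your Step 2 instead proposes to \emph{verify} the stated formula directly against all three families of $R_{a,b}$; this is valid and more self-contained (existence is established simultaneously with the formula rather than borrowed from representation theory), but it commits you to the full combinatorial bookkeeping that you acknowledge is the main obstacle, including cases where $R_{a,b}$ merely permutes indices. Your observation about why the prefactor $1/2^{\ell(\mathcal{J}_{2k})-1}$ is forced -- the interplay of the $\tfrac12$ in $\bs\varphi$, the symmetrization on $S_2$, and the involution $\mathcal{L}\leftrightarrow\mathcal{J}_{2k}\setminus\mathcal{L}$ -- is correct and genuinely explanatory, though the proposal stops short of carrying out that check. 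Step 3 on passage to arbitrary characteristic via $\mathbb{Z}[\tfrac12]$ coefficients agrees with the remark following the lemma in the paper.
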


\begin{proof}
	We prove formula (\ref{eqn}) by reverse induction on $k$.
	For $k=q$
	$$\p({u}_{\mathcal{J}_{2q}}{u}_{ \phi})=\dfrac{1}{2^{\ell(\mathcal{J}_{2q})-1}}\sum\limits_{\mathcal{L}\subset \mathcal{J}_{2q},\ell(\mathcal{J}_{2q})=2\ell(\mathcal{L})}{\rm sgn}(\mathcal{J}_{2q},\mathcal{L}) { e}_{-\mathcal{L}}\wedge {e}_{\mathcal{J}_{2q}^{c}}\wedge {e}_\mathcal{L}.$$
	Using the action of internal operator on $V(2\omega_m)$ we see that $$\mathfrak{i}({e}_{\gamma_i})\mathfrak{i}({e}_{\gamma_j})({u}_{\mathcal{J}_{2q}}{u}_{ \phi})=(-1)^{i+j}{u}_{\mathcal{J}_{2q}\setminus\{\gamma_i,\gamma_j\}}{u}_{\phi}.$$ 
The map $\p$ is equivariant, then, by \cite[Lemma 6.2.1]{GW},	the following diagram
$$
	\xymatrixrowsep{1.2pc} \xymatrixcolsep{2.2pc}
	\xymatrix{
		{V}(2\omega_m)\ar@{->}[r]^{\p}\ar@{->}[d]_{\mathfrak{i}({\bs e}_i)\mathfrak{i}({\bs e}_j)}& \bigwedge\limits^m {V}\ar@{->}[d]^{R_{{ e}_i,{ e}_j}}\\
		{V}(2\omega_m) \ar@{->}[r]^{\p}& \bigwedge\limits^m {V}\\
	}
	$$
commutes. Therefore  $\p({u}_{J_{2q}\setminus \{\gamma_i,\gamma_j\}}{u}_{ \phi})$ is of the form
	
	\begin{eqnarray*}
	\dfrac{1}{2^{2q-3}}\sum\limits_{\mathcal{L}\subset \mathcal{J}_{2q}\setminus \{\gamma_i,\gamma_j\},\ell(\mathcal{L})=q-1}{\rm sgn}(\mathcal{J}_{2q}\setminus\{\gamma_i,\gamma_j\},\mathcal{L}){e}_{-\mathcal{L}}\wedge  {e}_{(\mathcal{J}_{2q}\setminus \{\gamma_i,\gamma_j\})^c}\wedge {e}_{\mathcal{L}}.
	\end{eqnarray*}
	  Applying interior operator successively, one
	 gets expression for $k=1$ as
	$$\p({u}_{\{\gamma_i,\gamma_j\}}{u}_{ \phi})=\dfrac{{\rm sgn}(\{\gamma_i,\gamma_j\},\gamma_i,\gamma_j)}{2}({e}_{-\gamma_i}\wedge {e}_{\{\gamma_i,\gamma_j\}^c}\wedge {e}_{\gamma_i}+{e}_{-\gamma_j}\wedge {e}_{\{\gamma_i,\gamma_j\}^c}\wedge {e}_{\gamma_j}).$$
Again, by applying internal operator, we obtain
	$$\p({u}_{\phi}{u}_{\phi})= {e}_1\wedge \cdots \wedge {e}_{m}.$$
	By setting $\p|_{V(m-4i)}=0$ for $i\geq 1$, we get $\p: S_2({V}(\omega_m))\rightarrow \bigwedge\limits^m {V}$. 
\end{proof}

\begin{remark}\label{rmk2} 
		Let $\mathcal{L},\mathcal{M}\subset [1,m]$ of even cardinality. Set $\mathcal{L}\ominus \mathcal{M}=(\mathcal{L}\setminus \mathcal{M})\cup (\mathcal{M}\setminus \mathcal{L})$. Assume that $\mathcal{L}\ominus \mathcal{M}$ is nonempty. Note that $\mathcal{L}\ominus \mathcal{M}$ is of even cardinality.  Using Lemma \ref{lem:mapp}, one can evaluate the map $\p$ by permuting indices of the monomial ${u}_\mathcal{L}{u}_\mathcal{M}$  as
		
		\begin{align*}
		\dfrac{1}{2^{\ell(\mathcal{L}\ominus \mathcal{M})-1}}\sum\limits_{\mathcal{N}\subset \mathcal{L}\ominus \mathcal{M}, \ell(\mathcal{L}\ominus \mathcal{M})=2\ell(\mathcal{N})}{\rm sgn}(\mathcal{L}\cup \mathcal{M},\mathcal{N}) {e}_{-(\mathcal{L}\cap \mathcal{M})}\wedge {e}_{-\mathcal{N}}\wedge {e}_{\mathcal{L}^c\cap \mathcal{M}^c}\wedge {e}_\mathcal{N}.
		\end{align*} 
		Moreover, $\p({u}_\mathcal{L} {u}_\mathcal{L})={\rm sgn}(\mathcal{L},\mathcal{L}^c){e}_{-\mathcal{L}}\wedge {e}_{\mathcal{L}^c}$. 
\end{remark}

\begin{remark} The formulas for the map $\p$ remain valid over any commutative ring $R$ with $2$ invertible in $R$ and any orthogonal module $V$ in the sense of Definition \ref{def1}.
One can just apply the formula from Lemma \ref{lem:mapp} and they remain true in any hyperbolic basis of $V$.
\end{remark}

\section{Background on free resolutions}\label{sec:background}

Throughout the rest of the paper  $R$ and $S$ denote Noetherian commutative rings unless otherwise stated, $\mu(I)$ denotes the minimal number of generators of an ideal $I$ of $R$, and ${\rm I}_n$ denotes the $n\times n$ identity matrix. For a ring map $f:R\rightarrow S$, $\ker(f)$ denotes the kernel and $\text{im}(f)$ denotes the image of $f$, respectively. For an $R$-module $M$, $M^*$ denotes ${\rm Hom}_R(M,R)$. 

Buchsbaum and Eisenbud gave a structure theorem (also known as the First Structure Theorem) that describes the arithmetic structure of free resolutions as follows:

\begin{theorem}\label{th1} { \cite[Theorem 3.1  ]{BE1} (The First Structure Theorem)
		Let $R$ be a Noetherian ring and let $I$ be an ideal of $R$. Let 
		\[0\xrightarrow{} F_n\xrightarrow{{\bs d}_n} F_{n-1}\xrightarrow{{\bs d}_{n-1}}\cdots\xrightarrow{{\bs d}_3}F_2\xrightarrow{{\bs d}_2}F_1\xrightarrow{{\bs d}_1} F_0\]
		be a free $R$-resolution of $R/I$ and $r_i={\rm rank}({\bs d}_i)$. Then there exists a unique sequence of homomorphisms $\bs a_k:R\rightarrow \bigwedge\limits^{r_k} F_{k-1} $ for $1 \leq k \leq n$ such that $\bs a_n:=\bigwedge\limits^{r_n}{\bs d}_n$ and the following diagram commutes:	
	$$
	\xymatrixrowsep{1.2pc} \xymatrixcolsep{2.2pc}
	\xymatrix{
		\bigwedge\limits^{r_k} F_k \ar@{->}[r]^{\bigwedge\limits^{r_k}{\bs d}_k}\ar@{->}[d]_{\simeq} & \bigwedge\limits^{r_k} F_{k-1} \\
		\bigwedge\limits^{r_{k+1}} F_k^{*} \ar@{->}[r]^{\bs a_{k+1}^*}& R \ar@{->}[u]_{\bs a_{k}}\\
	}
	$$
	}
	
We refer to maps $\bs a_k$ in Theorem~\ref{th1} as the Buchsbaum-Eisenbud multiplier maps. Their coordinates are called the Buchsbaum-Eisenbud multipliers. 
	
\end{theorem}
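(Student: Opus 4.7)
The approach is descending induction on $k$, starting at $k = n$ and proceeding down to $k = 1$. For the base case $k = n$, the injectivity of $d_n$ (forced by the resolution terminating at $F_n$) gives $\mathrm{rank}(F_n) = r_n$, whence $\bigwedge^{r_n} F_n \cong R$ canonically and we set $a_n := \bigwedge^{r_n} d_n$. A parallel descending induction establishes the rank identity $\mathrm{rank}(F_k) = r_k + r_{k+1}$ for all $k$, which, after fixing a volume form on $F_k$, supplies the canonical isomorphism $\bigwedge^{r_k} F_k \cong \bigwedge^{r_{k+1}} F_k^*$ appearing as the vertical identification in the diagram.

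For the inductive step, assume $a_{k+1}$ has been constructed. Uniqueness of $a_k$ is the easier half: the image of $a_{k+1}^*\colon \bigwedge^{r_{k+1}} F_k^* \to R$ is the ideal $I(a_{k+1})$ generated by the Buchsbaum-Eisenbud multipliers of level $k+1$, and by the Buchsbaum-Eisenbud acyclicity criterion this ideal has grade at least one, hence contains a nonzerodivisor; any two candidates for $a_k$ would differ by a map whose image is annihilated by this ideal, forcing them to agree. The existence question then reduces to showing that the map $\bigwedge^{r_k} d_k$, transported via the above isomorphism to $\bigwedge^{r_{k+1}} F_k^* \to \bigwedge^{r_k} F_{k-1}$, factors through $a_{k+1}^*$; equivalently, that $\ker(a_{k+1}^*)$ is contained in the kernel of the transported map.

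The hard part, as expected, is existence. My plan is to reduce to a universal/generic setting: realize the given resolution as a specialization of a universal free resolution defined over a polynomial ring $\mathbb{Z}[x_{ij}]$ whose entries are independent indeterminates (subject only to $d \circ d = 0$). Over the field of fractions of this polynomial ring each $d_k$ splits as $\ker \oplus \mathrm{image}$, and the required factorization reduces to a transparent linear-algebra identity computing $\bigwedge^{r_k} d_k$ as a tensor of two top exterior powers on the split summands, one of which is precisely $a_{k+1}^*$. Clearing denominators descends the identity back to the polynomial ring itself, and applying the specialization map $\mathbb{Z}[x_{ij}] \to R$ then yields the identity in $R$. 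Here the acyclicity criterion plays a double role: it furnishes the nonzerodivisor used for denominator-clearing and uniqueness, and it guarantees that specialization preserves exactness so the universal argument applies to the given resolution.
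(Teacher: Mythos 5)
This theorem is cited by the paper from Buchsbaum--Eisenbud \cite[Theorem 3.1]{BE1}; the paper itself gives no proof, so there is no internal argument to compare yours against. Taking your sketch on its own terms: the rank bookkeeping, the orientation isomorphism $\bigwedge^{r_k} F_k \cong \bigwedge^{r_{k+1}} F_k^*$, the base case $a_n = \bigwedge^{r_n} d_n$, and the uniqueness argument are all sound. For uniqueness, a candidate $a_k$ is determined by the single element $a_k(1) \in \bigwedge^{r_k} F_{k-1}$, and any two candidates differ by an element annihilated by the ideal generated by the coordinates of $a_{k+1}$; since the acyclicity criterion forces this ideal to contain a nonzerodivisor, the difference vanishes.

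The existence step is where there is a genuine gap. There is no ``universal free resolution'' over $\mathbb{Z}[x_{ij}]$ in the sense you invoke: the closed subscheme cut out by the quadratic equations $d_k d_{k+1} = 0$ is reducible and singular, so its coordinate ring is not a domain and has no field of fractions, and a given resolution over $R$ is not a specialization of a single family over which the linear algebra ``splits generically.'' (Constructing generic or universal free resolutions is a hard problem in its own right and not available as a black box.) Even granting some form of generic splitting, the element $a_k(1)$ obtained by inverting a coordinate $c$ of $a_{k+1}$ a priori lives only in $\bigwedge^{r_k} F_{k-1} \otimes_R R_c$, and clearing denominators tells you only that $c\cdot a_k(1)$, and more generally $I(a_{k+1})\cdot a_k(1)$, lands in $\bigwedge^{r_k} F_{k-1}$; under a specialization the chosen $c$ may become a zerodivisor or zero, so the factorization does not descend. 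What closes the argument is working directly in $R$ and using the \emph{stronger} grade bound supplied by the acyclicity criterion, $\operatorname{grade} I(d_{k+1}) \ge k+1 \ge 2$: grade at least one produces the nonzerodivisor $c$ and the candidate $a_k(1)$ over $R_c$, while grade at least two gives $\operatorname{Ext}^1_R(R/I,R)=0$ for the relevant ideal $I$, which is exactly what forces an element of $\bigwedge^{r_k} F_{k-1}\otimes_R R_c$ that is multiplied into $\bigwedge^{r_k} F_{k-1}$ by all of $I$ to already lie in $\bigwedge^{r_k} F_{k-1}$. That descent-via-grade-two step is the missing idea; without it the inductive construction does not land back in the original ring.
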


The next remark reveals the structure of a minimal free resolution of $R/I$ where $R$ is a complete regular local ring and $I$ is a Gorenstein ideal of codimension four.

\begin{remark}\label{rmk1}{ {\rm \cite{KM2}}
		Let $R$ be a Gorenstein local ring in which $2$ is a unit and let $I\subset R$ be a Gorenstein ideal of codimension four with $\mu(I)=n$.  Let 
		\begin{equation}\label{eq1}
		\mathbb{F}: 0\rightarrow{}F_4\xrightarrow{{\bs d}_4} F_3 \xrightarrow{{\bs d}_3} F_2\xrightarrow{{\bs d}_2} F_1\xrightarrow{{\bs d}_1} R
		\end{equation}
		be a minimal free resolution of $R/I$. Then we have the following:
		\begin{enumerate}[\rm (a)]
			\item By Gorenstein duality, $F_{4-i}\cong F^*_{i}$.
			\item ${\rm rank}(F_1)=n$ and ${\rm rank}(F_2)=2n-2$.
			\item By \cite[Theorem 2.4]{KM2}, for a minimal resolution $\mathbb F$ of $R/I$ 
			\[\mathbb{F}: 0\rightarrow{}R\xrightarrow{{\bs d}_1^*} F_1^* \xrightarrow{{\bs d}_3} F_2\xrightarrow{{\bs d}_2} F_1\xrightarrow{{\bs d}_1} R,\]
			there exist a symmetric isomorphism ${\tilde Q}$ and an isomorphism $\rho:\mathbb{F}\rightarrow \mathbb{F}^*$  of the form: 
			\[
			\xymatrixrowsep{1.2pc} \xymatrixcolsep{2.2pc}
			\xymatrix{
				0\ar@{->}[r]^{}&R\ar@{->}[r]^{{\bs d}_1^*}\ar@{=}[d]^{}&F_1^*\ar@{->}[r]^{{\bs d}_3}\ar@{=}[d]^{}&F_2\ar@{->}[r]^{{\bs d}_2}\ar@{->}[d]^{{\tilde Q}}&F_1\ar@{->}[r]^{{\bs d}_1}& R\ar@{=}[d]^{}\\
				0\ar@{->}[r]^{}&R\ar@{->}[r]^{{\bs d}_1^*}&F_1^* \ar@{->}[r]^{{\bs d}_2^*}&F_2^*\ar@{->}[r]^{{\bs d}_3^*}& F_1\ar@{=}[u]^{}\ar@{->}[r]^{{\bs d}_1}&R \\
			}
			\]
			
			\item If $R$ is a complete regular local ring, then the dualizing matrix ${\tilde Q}$ is of the form $$\begin{bmatrix}
			0 & {\rm I_{n-1}}\\
			{\rm I_{n-1}} & 0
			\end{bmatrix},$$
			and, by part (c),  the resolution of $R/I$ is 
			 \[\mathbb{F}: 0\rightarrow{}R\xrightarrow{\bs d_1^t} F_1^* \xrightarrow{\bs {\tilde Q} \bs d_2^t} F_2\xrightarrow{\bs d_2} F_1\xrightarrow{\bs d_1} R.\]

			\item  Let $\langle \  , \  \rangle: F_1\otimes_R F_1^{*}\rightarrow R$ be the evaluation map and let $Q: F_2\otimes_R F_2\rightarrow R$ be the symmetric bilinear map induced by ${\tilde Q}$. Then  
			\begin{align*}\label{adj}
			\langle {\bs d}_2 x_2,x_3\rangle=Q(x_2,{\bs d}_3 x_3)\;\; \text{for all}\;\; x_2\in F_2, \;   x_3\in F_1^*.
			\end{align*}
			\item $\mathbb{F}$ has a multiplicative structure which makes it an associative differential graded $R$-algebra.  
			
			\item The module $F_2$ has a structure of an even orthogonal module of rank $2n-2$ according to  Definition~\ref{def1}. 
		\end{enumerate}
	}
\end{remark}

\begin{remark} Let $R$ be a regular local ring or a polynomial ring over a field and let $I\subset R$ be a Gorenstein ideal of codimension four with $\mu(I)=n$.  Let 
		\begin{equation}\label{eq1}
		\mathbb{F}: 0\rightarrow{}F_4\xrightarrow{{\bs d}_4} F_3 \xrightarrow{{\bs d}_3} F_2\xrightarrow{{\bs d}_2} F_1\xrightarrow{{\bs d}_1} R
		\end{equation}
		be a minimal free resolution of $R/I$. 
Then, by Theorem~\ref{th1}, we have $\bs a_4= \bs d_4$ and there is a map $\bs a_3:R\rightarrow \bigwedge\limits^{n-1} F_{2}$ such that the diagram commutes 
$$
	\xymatrixrowsep{1.2pc} \xymatrixcolsep{2.2pc}
	\xymatrix{
		\bigwedge\limits^{n-1} F_3 \ar@{->}[r]^{\bigwedge\limits^{n-1}{\bs d_3\;\;}}\ar@{->}[d]_{\simeq} & \bigwedge\limits^{n-1} F_{2} \\
		 F_3^{*} \ar@{->}[r]^{\bs d_4^*}& R \ar@{->}[u]_{\bs a_{3}}\\
	}
	$$


\end{remark}

%

\subsection{Generic doubling of perfect ideals}
Let $R$ be a regular local ring, let $J$ be a perfect ideal of $R$ of codimension 3, and  let $S:=R/J$. Assume $S$ is generically Gorenstein with a canonical module $\omega_S$. It is known that one can identify $\omega_S$  with an ideal of $S$ and $S/\omega_S$ is Gorenstein, \cite[Proposition 3.3.18]{BH}. 

Let $(\mathbb{F},{\bs d})$ be a minimal free resolution of $S$ over $R$. Then 
$(\mathbb{F}^{*},{\bs d}^*)$ is a minimal free resolution of $\omega_{S}$ as $J$ is perfect. 
Take the minimal generators $f_1, \ldots, f_{\ell}$ of  ${\rm Hom}_S(\omega_S,S)$, and let $\widetilde{R}:=R[\tau_1, \ldots, \tau_{\ell}]$. Now we consider the injective map $\psi: \omega_{\widetilde{S}} \rightarrow \widetilde{S}$ where $\psi=\sum_{i=1}^{\ell} \tau_if_i$, $\widetilde{S}:=\widetilde{R}/J\widetilde{R}$, and  $\omega_{\widetilde{S}}$ is a canonical module of $\widetilde{S}$. 
Then $\omega_{\widetilde{S}}\simeq \text{im}(\psi)$ since $\psi$ is injective. Next $\psi$ lifts to a map of complexes $\phi: \mathbb{\widetilde{F}}^{*}\rightarrow \mathbb{\widetilde{F}}$ which gives us a resolution of a Gorenstein ring $\widetilde{S}/\omega_{\widetilde{S}}$ of codimension 4. 
In this case, we say that the resolution of $\widetilde{S}/\omega_{\widetilde{S}}$ is obtained by generic doubling of $\mathbb{F}$.



\section{Spinor structures on resolutions of Gorenstein ideals of codimension four}\label{sec:spinor}
\begin{definition} 
		Assume notation from Remark \ref{rmk1}. We say that a given resolution of $R/I$ has a spinor structure if there exists a map $\tilde{\bs a}_{3}: R \rightarrow {V}(\omega_{n-1})\otimes R$ such that the following diagram commutes
		\[
		\xymatrixrowsep{1.5pc} \xymatrixcolsep{6.0pc}
		\xymatrix{
			R\ar@{->}[dr]_{\bs a_3} \ar@{->}[r]^{S_2(\tilde{\bs a}_3)}& S_2(V(w_{n-1}))\otimes R  \ar@{->}[d]^{\p\otimes R} \\
			& \bigwedge\limits^{n-1} F_2}
		\]
		where $\bs a_3$  is the map given by the First Structure Theorem of Buchsbaum and Eisenbud in Theorem \ref{th1} and $\p$ is the map
		described in Section \ref{sec:p}.
\end{definition} 

Now we are ready to show the existence of a spinor structure on a length four minimal resolution of a Gorenstein ideal over a complete regular local rings and a polynomial ring over a field.  

\begin{theorem}\label{mainthm} $\phantom{eee}$
\begin{enumerate}
	\item	Let $(R,\mathfrak{m},k)$ be a Gorenstein local ring in which $2$ is a unit. Let $I\subset R$ be Gorenstein ideal of grade 4 and let $\mathbb F$ be a minimal free resolution of $R/I$ of the form
	\[\mathbb{F}:0\rightarrow{} F_4\xrightarrow{{\bs d}_4} F_3\xrightarrow{{\bs d}_3}F_2\xrightarrow{{\bs d}_2}F_1\xrightarrow{{\bs d}_1 }R\rightarrow 0.\] 
	Assume that the multiplication $Q:F_2\otimes F_2 \rightarrow F_4$ defined in Remark~\ref{rmk1}.(e)  is in hyperbolic form. Then there exists a spinor structure on $\mathbb{F}$. 
		\item The same conclusion holds when $R$ is a polynomial ring over an algebraically closed field $K$ of characteristic different from $2$ 		and $I$ is a homogeneous Gorenstein ideal of codimension $4$.
		\end{enumerate}
\end{theorem}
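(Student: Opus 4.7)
The goal is to construct a map $\tilde{\bs a}_3 \colon R \to V(\omega_{n-1}) \otimes R$ satisfying $\p \circ S_2(\tilde{\bs a}_3) = \bs a_3$. My plan is to interpret $\bs a_3$ geometrically as a Pl\"ucker vector of a maximal isotropic submodule, construct its spinor square root locally via the big-cell description of the isotropic Grassmannian, and finally extend the construction to the degenerate setting of a minimal resolution.

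First I would show that $U := \mathrm{im}(\bs d_3) \subseteq F_2$ is a rank $m := n-1$ maximal isotropic submodule of $(F_2, Q)$. Isotropy follows from the adjunction in Remark~\ref{rmk1}(e) combined with $\bs d_2 \bs d_3 = 0$: for any $y_1, y_2 \in F_3$,
\[ Q(\bs d_3 y_1, \bs d_3 y_2) = \langle \bs d_2 \bs d_3 y_1, y_2\rangle = 0. \]
Chasing the commutative diagram of the First Structure Theorem with $\bs a_4 = \bs d_4$, I would identify $\bs a_3(1) \in \bigwedge^m F_2$ (up to sign) with the Pl\"ucker vector of $U$; concretely, if $\bs d_4(1) = \sum_i c_i f_i$ in a basis $\{f_i\}$ of $F_3$, then $\bs a_3(1) = \sum_i (-1)^{i+1} c_i \bigwedge_{j \neq i} \bs d_3 f_j$, which spans $\bigwedge^m U \subset \bigwedge^m F_2$.

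Next, fixing a hyperbolic basis $\{e_1, \ldots, e_m, e_{-1}, \ldots, e_{-m}\}$ of $F_2$, I would work in the big cell of the component $\mathrm{Spin}(2m)/P_{m-1}$ described in Section~\ref{isotropic}. There, a maximal isotropic submodule $U$ transverse to $W^* = \langle e_{-i}\rangle$ is the row span of a matrix $(I \mid X)$ with $X$ skew-symmetric, and the candidate spinor vector is
\[ \tilde{\bs a}_3 := \sum_{\mathcal{L} \subseteq [1,m],\, |\mathcal{L}| \text{ even}} \mathrm{Pf}_\mathcal{L}(X)\, u_\mathcal{L} \in V(\omega_m) \otimes R. \]
Using the explicit formula for $\p$ from Lemma~\ref{lem:mapp} together with Remark~\ref{rmk2}, one verifies that $\p(\tilde{\bs a}_3 \otimes \tilde{\bs a}_3)$ recovers the Pl\"ucker vector of $U$, and hence equals $\bs a_3$. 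The verification amounts to the classical identity expressing each maximal minor of $(I \mid X)$ as a signed product of two principal Pfaffians of $X$.

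The main obstacle is that for a minimal resolution over a local ring, $\bs d_3$ has entries in $\mathfrak{m}$, so $U$ is never in the big cell of any chart and the construction of Step~2 does not literally apply. I would resolve this via a universal family argument: the spinor coordinates are universal polynomial expressions in the entries of $\bs d_3$ with respect to a chosen hyperbolic decomposition of $F_2$, agreeing with the Pfaffian formulas on the generic fiber of a versal deformation of the resolution, and therefore specializing correctly to any Gorenstein local $R$. A subtle point to address along the way is the consistent choice between the two connected components of $\mathrm{IGrass}(m, 2m)$, which corresponds to the sign ambiguity $\tilde{\bs a}_3$ versus $-\tilde{\bs a}_3$. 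Part~(2) then follows from part~(1) after localizing at the irrelevant ideal of the polynomial ring.
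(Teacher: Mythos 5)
Your Steps 1--2 (isotropy of $\mathrm{im}(\bs d_3)$ via adjunction and $\bs d_2\bs d_3=0$, and the identification of $\bs a_3(1)$ with the Pl\"ucker vector of $U$) are sound and match the paper, as does your big-cell Pfaffian observation. But the ``universal family argument'' you use to bridge to the degenerate setting is a genuine gap, and it is exactly where the real content of the theorem lives. The assertion that ``the spinor coordinates are universal polynomial expressions in the entries of $\bs d_3$'' is not proved, and as stated it is not true: normalizing so that $\p\circ S_2(\tilde{\bs a}_3)=\bs a_3$ holds on the nose, each spinor coordinate squares to a Buchsbaum--Eisenbud multiplier $\bs a_{3,\mathcal K}$ (see Remark~\ref{rmkmainthm}), so it is a square root of a minor, not a polynomial in the entries of $\bs d_3$. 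Your big-cell computation does not rescue this --- to put $U$ in the form $(J\mid X)$ you must invert an $(n-1)\times(n-1)$ block, so $X$, and hence each $\mathrm{Pf}_{\mathcal L}(X)$, is only rational in the entries of $\bs d_3$, with denominators vanishing precisely on the minimal resolutions you need to cover. Passing to a versal deformation and arguing the formula ``specializes correctly'' therefore assumes what has to be shown.

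The paper sidesteps this by a fraction-field-plus-integrality argument that your proposal is missing. Tensoring with $R_{(0)}$ splits the complex, so $\mathrm{im}(\bs d_3)\otimes R_{(0)}$ is a free rank-$(n-1)$ summand of $F_2\otimes R_{(0)}$ and one can pick a hyperbolic basis of $F_2\otimes R_{(0)}$ adapted to it; in that adapted basis the spinor vector is trivial and $\p\circ S_2(\tilde{\bs a}_3)=\bs a_3$ holds by inspection. Equivariance of $\p$ transports the identity to the original hyperbolic basis, giving spinor coordinates that a priori live only in $R_{(0)}$. Finally, each such coordinate has square equal to a Buchsbaum--Eisenbud multiplier, which is in $R$, and since $R$ is a normal domain (an implicit hypothesis the paper flags in Remark~\ref{rmk3}) each spinor coordinate already lies in $R$. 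It is this normality step, not a polynomiality claim, that handles the ``entries in $\mathfrak m$'' obstruction. One smaller inaccuracy: Part~(2) is not obtained from Part~(1) by localizing at the irrelevant ideal; the paper first establishes a hyperbolic basis of $F_2$ in the graded case (Lemma~\ref{lemma}) and then reruns the same $R_{(0)}$ argument.
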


Before we prove Theorem \ref{mainthm}, let us explain its meaning more precisely.

\begin{remark} \label{rmkmainthm} $\phantom{dd}$ 
		\begin{enumerate}
	\item Assume the hypothesis of Theorem \ref{mainthm} and let $\mu(I)=n$.
The Buchsbaum-Eisenbud multiplier $\bs a_{3,\mathcal{K}}$ is the square of the spinor coordinate $\widetilde{\bs a}_{3,\mathcal{J}}$ for $\mathcal{K}=-\mathcal{J}\cup \mathcal{J}^c$, where $\mathcal{J}\subset [1, n-1]$ with $\ell(\mathcal{J})$ even, i.e when the multiindex $\mathcal{K}\subset \lbrace \pm 1,\ldots ,\pm (n-1)\rbrace$ of cardinality $n-1$ contains all the numbers $1,2,\ldots ,n-1$ (with arbitrary signs). 
The Buchsbaum-Eisenbud multipliers for $\mathcal{K}=-\mathcal{J}\cup \mathcal{J}^c$, where $\mathcal{J}\subset [1, n-1]$ with $\ell(\mathcal{J})$ odd are all zero.
Note that we already made a choice that im$(d_3)$ is in the connected component Spin$(2(n-1))/P_{n-2}$. If we make another choice, then the opposite happens:
the multiplier $\bs a_{3,\mathcal{K}}$ is the square of the spinor coordinate $\widetilde{\bs a}_{3,\mathcal{J}}$ for $\mathcal{K}=-\mathcal{J}\cup \mathcal{J}^c$, where $\mathcal{J}\subset [1, n-1]$ with $\ell(\mathcal{J})$ odd, i.e when the multi-index $\mathcal{K}\subset \lbrace \pm 1,\ldots ,\pm (n-1)\rbrace$ of cardinality $n-1$ contains all the numbers $1,2,\ldots ,n-1$ (with arbitrary signs). 
The Buchsbaum-Eisenbud multipliers for $\mathcal{K}=-\mathcal{J}\cup \mathcal{J}^c$, where $\mathcal{J}\subset [1, n-1]$ with $\ell(\mathcal{J})$ even are all zero.

\item For other indices $\mathcal{K}$ (i.e. those where some numbers $\pm i$ are missing for some $1\le i\le n-1$), the multiplier $\bs a_{3,\mathcal{K}}$ is given by the expression from Lemma \ref{lem:mapp}.
	\end{enumerate} 
	\end{remark}

We need the following lemma for the proof of Theorem \ref{mainthm}.

\begin{lemma}\label{lemma}
Let $R$ be a polynomial ring over a field $K$ of characteristic different than 2 and let $I$ be a codimension four homogeneous Gorenstein ideal of $R$ with $\mu(I)=n$. Let 
\[\mathbb F: 0\rightarrow R(-d)\xrightarrow{d_4}F\xrightarrow{d_3}G\simeq G^*\xrightarrow{d_2}F^*\xrightarrow{d_1}R\] be a graded free resolution of $R/I$. Then $G$ has a hyperbolic basis.
\end{lemma}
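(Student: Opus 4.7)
The plan is to combine the local Kustin--Miller theorem with a graded lifting argument. Let $\mathfrak{m}$ denote the graded maximal ideal of $R$. The localization $R_\mathfrak{m}$ is a regular (hence Gorenstein) local ring in which $2$ is a unit, and $IR_\mathfrak{m}$ is a codimension-four Gorenstein ideal of $R_\mathfrak{m}$ with minimal free resolution $\mathbb{F}\otimes R_\mathfrak{m}$. By \cite[Theorem 5.3]{KM2}, the symmetric bilinear form $Q$ on $G\otimes R_\mathfrak{m}$ induced by the multiplicative structure is hyperbolic. Reducing modulo $\mathfrak{m}R_\mathfrak{m}$ then yields a hyperbolic form $\bar Q$ on the $K$-vector space $G/\mathfrak{m}G$.

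To promote this to a hyperbolic basis of $G$ itself I use the grading. Write $G=\bigoplus R(-b_i)$ and $F_4=R(-d)$, so that $Q\colon G\otimes G\to R(-d)$ is a homogeneous map; Gorenstein self-duality forces the multiset $\{b_i\}$ to be stable under $b_i\mapsto d-b_i$. In any homogeneous basis $\{e_i\}$ of $G$, the entry $Q(e_i,e_j)\in R$ has degree $b_i+b_j-d$, and is therefore zero when $b_i+b_j<d$, a scalar in $K$ when $b_i+b_j=d$, and an element of $\mathfrak{m}$ when $b_i+b_j>d$. Consequently $\bar Q$ is the orthogonal direct sum, over pairs $(a,d-a)$ of graded pieces, of the pairings $(G/\mathfrak{m}G)_a\otimes(G/\mathfrak{m}G)_{d-a}\to K$. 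Each pairing with $a\neq d/2$ is automatically hyperbolic (any non-degenerate pairing of two equidimensional $K$-vector spaces is hyperbolic as a symmetric form), and if $d$ is even and a middle summand $a=d/2$ is present, Witt cancellation applied to $\bar Q$ (hyperbolic by Kustin--Miller) and the hyperbolic non-middle summands shows this middle form is hyperbolic as well. I therefore pick a homogeneous hyperbolic basis of $G/\mathfrak{m}G$.

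Next I lift this $K$-basis to homogeneous elements of $G$, producing an $R$-basis whose Gram matrix agrees with the standard hyperbolic matrix modulo $\mathfrak{m}$; the ``error'' entries are confined to index pairs with $b_i+b_j>d$ and lie in $\mathfrak{m}$ of strictly positive degree. A graded Gram--Schmidt procedure then corrects the basis: iterating over bidegrees, and using the unit pivots on the anti-diagonal, I modify each basis vector by homogeneous corrections $e_i\mapsto e_i+\sum a_{ij}e_j$ with the $a_{ij}\in\mathfrak{m}$ of the appropriate positive degree, killing the error one degree at a time. Because the corrections lie in strictly positive degree, they do not reintroduce errors at lower degree, and since only finitely many bidegrees $(b_i,b_j)$ occur in $G\otimes G$ the process terminates and yields a hyperbolic basis of $G$ over $R$.

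The main obstacle will be organizing this last step carefully. The delicate point is that the correction must descend to $R$ and not merely $R_\mathfrak{m}$, which is exactly where the graded structure becomes essential: the pivots come from the already-hyperbolic constant part of the Gram matrix and are units in $K\subset R$, so each homogeneous correction solves a linear system with unit coefficients and has a solution in $R$ itself, with no need to invert anything in $\mathfrak{m}$. Once this bookkeeping is in place the proof is complete.
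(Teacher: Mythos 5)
Your proof is correct in outline and takes a genuinely different route from the paper's. The paper's proof works entirely over $R$ by a direct ``graded Gram--Schmidt'' induction on rank: it chooses a basis element $e_1$ of top degree $d-a_1$, finds its partner $e_1'$ of bottom degree $a_1$ with $Q(e_1,e_1')=1$ (degree reasons force $Q(e_1',e_1')=0$ and put $Q(e_1,e_1)$ in positive degree), corrects the $2\times 2$ block by the unipotent change $e_1\mapsto e_1-\tfrac12 Q(e_1,e_1)e_1'$, passes to $W_1^{\perp}$, and recurses. You instead localize (and implicitly complete) at $\mathfrak m$, invoke Kustin--Miller to conclude $\bar Q$ on $G/\mathfrak m G$ is hyperbolic, use the graded decomposition plus Witt cancellation to extract a \emph{homogeneous} hyperbolic $K$-basis, and then lift with a degree-indexed Gram--Schmidt. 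The two approaches meet in the final lifting step, which is essentially the paper's unipotent reduction reorganized by error degree rather than by rank.

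What your route buys: it handles cleanly the case where $G$ has a summand in the middle degree $d/2$. There the Gram block has constant entries and is a non-degenerate symmetric form over $K$, and the paper's assertion that ``by a change of basis one can transform $\tilde Q$ into a hyperbolic form'' is only automatic when $K$ is algebraically closed (which is indeed the hypothesis of Theorem~\ref{mainthm}(2), but not the hypothesis of the lemma as stated). Your Kustin--Miller $+$ Witt cancellation argument proves this middle block is hyperbolic over any $K$ with $\operatorname{char} K\neq 2$, so your proof actually justifies the lemma at the stated level of generality, whereas the paper's proof implicitly uses algebraic closedness. Two small points to tighten: (i) the cited Kustin--Miller result is stated for a \emph{complete} regular local ring, so you should pass to $\widehat{R_\mathfrak m}$ before invoking it (harmless, since you only use the conclusion modulo $\mathfrak m$); (ii) in the graded Gram--Schmidt, a single correction $e_i\mapsto e_i - Q(e_i,e_j)\,e_{\bar j}$ can perturb other entries in degree exactly equal to the one being fixed, so the cleanest bookkeeping is to argue that one full sweep strictly raises the minimal error degree and then appeal to the bound $\max_i b_i + \max_j b_j - d$ for termination, rather than asserting that corrections never reintroduce errors at the current degree.
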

\begin{proof}
Let $$G=(\bigoplus\limits_{i=1}^{n-1}R(-a_i))\bigoplus (\bigoplus\limits_{i=1}^{n-1}R(-a_i+d))$$ where $a_1\leq \cdots\leq a_{n-1}\leq d-a_{n-1}\leq \cdots\leq d-a_1$. Let ${\tilde Q}:G\rightarrow G^*$ be a symmetric isomorphism that induces a bilinear map $Q: G\otimes G \rightarrow R$. Now choose a basis element $e_1$ of highest degree $d-a_1$ of $G$. Then there exists a complementary $e_1'$ of lowest degree $a_1$ such that $Q(e_1, e_1')=1$ since $Q$ is non-degenerate; and, for any basis element $e_j$ in $G$, we have $Q(e_1, e_j)$ is a constant. 
Let $W_1=Re_1+Re_1'$, and let $W_1^{\perp}$ denote the orthogonal complement of $W_1$ in $G$.

If $d-a_1=a_1$, then the entries of the matrix with respect to the map ${\tilde Q}$ belong to $K$. Then, by the change of basis, one can transform ${\tilde Q}$ into a hyperbolic form. In the case that $d-a_1\neq a_1$, there exists a $2\times 2$ submatrix of ${\tilde Q}$ with respect to the basis $\{e_1, e'_1\}$ of the form   
$${\tilde Q}_1=\begin{bmatrix}
0 & 1 \\
1 & z
\end{bmatrix}, \;\; \text{where} \; z\in R \; \text{is of positive degree.}$$
 Next choose a $2\times 2$ matrix 
 $A=\begin{bmatrix} 1  & -\frac{1}{2}z \\ 
0  & 1 \end{bmatrix}$ such that $A^T{\tilde Q}_1A=\begin{bmatrix} 0 & 1\\ 1 & 0\end{bmatrix}$. Then $A^T{\tilde Q}_1A$ restricted to $W_1$ is in the hyperbolic form.

 If the basis elements of $W_1^{\perp}$ are of the same degree, then we are done. 
Otherwise, repeating the argument above for ${\tilde Q}_1$, we can always construct an hyperbolic pair using the highest and the lowest degree basis elements of $W_1^{\perp}$.

Continuing in this way, we get $W_k=W_{k-1}\oplus Re_k+Re_k'$ for some $k \leq n-1$ such that 
the lowest and highest degree basis elements of $W_k$ can be transformed into a hyperbolic pair and the basis elements of $W_k^{\perp}$ are of the same degree. 
Then the entries of the matrix with respect to the map $\varphi$ restricted to $W_k^{\perp}$ belong to $K$, and, by the change of basis, one can transform this matrix into a hyperbolic form. This finishes the proof of the lemma.\end{proof}

\begin{remark} Let $R$ be a complete regular local ring (respectively a polynomial ring over a field) in which 2 is a unit and let $I\subset R$ be a Gorenstein ideal (respectively  a graded Gorenstein ideal) of codimension four with $\mu(I)=n$. Then, by Remark ~\ref{rmk1}.(d) (respectively Lemma ~\ref{lemma}), a minimial free resolution of $R/I$ is of the form 
			 \[\mathbb{F}: 0\rightarrow{}R\xrightarrow{\bs d_1^t} F_1^* \xrightarrow{\bs {\tilde Q} \bs d_2^t} F_2\xrightarrow{\bs d_2} F_1\xrightarrow{\bs d_1} R.\]

\end{remark}
\begin{proof}[Proof of Theorem~\ref{mainthm}]  We first prove part (1). Let $\mu(I)=n$.
	Let $\{g_1,\ldots,g_n\}$ be a basis of $F_1$. By Remark \ref{rmk1}, 
	there exists a multiplicative structure on the resolution $\mathbb{F}$ for which the multiplication $F_2\otimes F_2\rightarrow F_4$ 
	can be brought to standard hyperbolic form. Let $\lbrace e_1,\ldots ,e_{n-1}, e_{-n+1},\ldots ,e_{-1}\rbrace$ be a hyperbolic basis of $F_2$.
	Such basis exists over $R$ by Remark \ref{rmk1}.
	By Leibniz formula, we see that the image of $d_3$ is an isotropic submodule.
	
	Let $R_{(0)}$ be the field of fractions of $R$ .
	The complex $\mathbb{F}\otimes_R R_{(0)}$ is split exact. We can choose a hyperbolic basis $\lbrace e'_1,\ldots , e'_{n-1}, e'_{-n+1}, \ldots, e'_{-1}\rbrace$ of $F_2\otimes_R R_{(0)}$ such that im$(\bs d_3\otimes_R R_{(0)})$ is the span of $e'_1,\ldots ,e'_{n-1}$. This subspace occurs then in  the connected component of the isotropic Grassmannian IGrass$(n-1, F_2\otimes_R R_{(0)})$
	corresponding to Spin$(2n-2)/P_{n-2}$.  The subspace im$(d_3\otimes_R R_{(0)})$ is isotropic, and  it has spinor coordinates in $V(\omega_{n-2})$. 
	
	The Pl\"ucker coordinates of this subspace are Buchsbaum-Eisenbud multipliers, i.e., the coordinates of the map $a_3$. The Buchsbaum-Eisenbud multipliers
	must therefore have expressions  given by the Lemma \ref{lem:mapp} in terms of spinor coordinates. 
	By equivariance such relations have to be satisfied for every choice of hyperbolic basis of $F_2\otimes_R R_{(0)}$, and for every choice of basis in $F_1\otimes R_{(0)}$.
	
	The only remaining thing is to check that in our original bases $\lbrace g_1,\ldots ,g_n\rbrace$ and \break $\lbrace e_1,\ldots ,e_{n-1}, e_{-1},\ldots ,e_{-n+1}\rbrace$ the spinor coordinates are not just in $R_{(0)}$ but in $R$. But for each of them its square is the appropriate Buchsbaum-Eisenbud multiplier which is in $R$. Since $R$ is normal, we conclude that all spinor coordinates are in $R$. This means all the relations from Lemma \ref{lem:mapp} are satisfied in $R$ since they hold in $R_{(0)}$. This proves part (1). 
	
	In case of polynomial rings over fields, $F_2$ has a hyperbolic basis by Lemma~\ref{lemma}. Thus, the proof of part (2) of the theorem follows the same as in part (1).	
 \end{proof}
 
As a consequence of Remark~\ref{rmk1}.(d) and Theorem~\ref{mainthm}, we get the following corollary.
  
 \begin{corollary} If $R$ is a complete regular local ring in which $2$ is a unit and $I$ is a Gorenstein ideal of grade 4, then there exists a spinor structure on a minimal free resolution of $R/I$.
 \end{corollary}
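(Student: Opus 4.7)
The plan is to derive the corollary as an immediate consequence of Theorem~\ref{mainthm}(1), since all of its hypotheses except one are manifestly satisfied in this setting: $R$ is Gorenstein local with $2$ a unit, $I$ is Gorenstein, and grade $4$ coincides with codimension $4$ here because $R$ is Cohen--Macaulay. The only nontrivial hypothesis left to verify is that the multiplication $Q\colon F_2\otimes F_2\to F_4$ coming from the DG-algebra structure on the minimal free resolution $\mathbb{F}$ can be brought to standard hyperbolic form.

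To verify this I would invoke Remark~\ref{rmk1}(d), which records the classical fact that for a complete regular local ring $R$ the symmetric isomorphism $\tilde{Q}\colon F_2\to F_2^{*}$ realizing the self-duality of $\mathbb{F}$ in Remark~\ref{rmk1}(c) can be taken, after a suitable change of basis on $F_2$, to have matrix $\begin{bmatrix} 0 & I_{n-1}\\ I_{n-1} & 0\end{bmatrix}$. Equivalently, $F_2$ admits a hyperbolic basis in the sense of Definition~\ref{def1}, so the induced pairing $Q\colon F_2\otimes F_2\to R\cong F_4$ is in hyperbolic form. With this hypothesis now in force, Theorem~\ref{mainthm}(1) directly produces the equivariant lift $\tilde{\bs a}_3\colon R\to V(\omega_{n-1})\otimes R$ whose existence is, by definition, a spinor structure on $\mathbb{F}$.

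There is essentially no conceptual obstacle; the corollary is simply the packaging of Theorem~\ref{mainthm}(1) with the standard-form reduction available over complete regular local rings. The one point worth flagging is that completeness of $R$ is exactly what enables Remark~\ref{rmk1}(d), since diagonalizing the symmetric form $\tilde Q$ to the identity-block model relies on Hensel's lemma in $R$. A regular-local version without the completeness assumption would instead proceed either by passing to the $\m$-adic completion and descending, or by adapting the graded hyperbolization argument of Lemma~\ref{lemma}.
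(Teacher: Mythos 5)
Your proposal matches the paper's reasoning exactly: the paper presents this corollary as an immediate consequence of Remark~\ref{rmk1}(d), which supplies the hyperbolic form on $F_2$ over a complete regular local ring, together with Theorem~\ref{mainthm}(1). The extra remark about Hensel's lemma is a reasonable gloss on why completeness is used, but it is not needed since the paper simply cites Kustin--Miller for that reduction.
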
 

\begin{remark} \label{rmk3}
$\phantom{dd}$ 
		\begin{enumerate}
		\item The hypotheses of $R$ being a domain could probably be dropped. It  would follow by localizing at the set of non-zero divisors in $R$, but probably requires some additional work on half-spinor representations over commutative rings.
		\item The normality assumption also might not be necessary.
		\item As the examples in the next section show, often the spinor structures exist even without characteristic $\ne 2$ assumptions. However this involves finding a hyperbolic basis for $F_2$ which was established by Kustin and Miller only under this assumption.

\item Spinor coordinates are in the radical of ideal $I$. If $I$ is a radical ideal, then spinor coordinates are in $I$.
We do not know any Gorenstein ideal $I$ of codimension $4$ for which the spinor coordinates are not in the ideal $I$.
	\end{enumerate} 
\end{remark}

\section{Examples of spinor structures on resolutions of codimension four Gorenstein ideals}\label{sec:examples}
In this section, we give explicit calculations of spinor structures on resolutions of well-known Gorenstein ideals with $4$, $6$, and $9$ generators. 
In some of these examples, we see that the spinor structure occurs even under weaker assumptions on $R$ than claimed in Theorem \ref{mainthm}.
The first two examples are also discussed in Reid's paper, see \cite{R}. 

\begin{example}\label{spinci}
		Let $R$ be an arbitrary commutative ring and  $\mathcal{K}(x_1, x_2, x_3, x_4; R)_{\bullet}$ be the Koszul complex resolving a complete intersection in codimension $4$ on elements $x_1, x_2, x_3, x_4$ from R. Let $F=R^4$. Then $\mathcal{K}(x_1,x_2,x_3,x_4; R)$ is a resolution of $R/I$ of the form 
	$$\mathcal{K}(x_1,x_2,x_3,x_4; R): 0\rightarrow{}\bigwedge\limits^{4}F\xrightarrow{{\bs d}_1^t}\bigwedge\limits^{3} F\xrightarrow{\bs {\tilde Q}{\bs d}_2^t}\bigwedge\limits^{2}F\xrightarrow{{\bs d}_2}\bigwedge\limits^{1}F\xrightarrow{{\bs d}_1}\bigwedge\limits^{0}F$$ 
	with ${\bs d}_1=[\begin{smallmatrix} x_1 & x_2 & x_3 & x_4\end{smallmatrix}]$, $
	{\bs d}_2=\left[\begin{smallmatrix}
	-x_4 & 0 &0 &0 & x_3 &  -x_2\\
	0 & -x_4 & 0 & -x_3 & 0 & x_1\\
	0 & 0 & -x_4 & x_2 & -x_1 & 0\\
	x_1 & x_2 & x_3 & 0 & 0 & 0
	\end{smallmatrix} \right]
	$. 
	Our calculation of the matrices $\bs d_1$ and $\bs d_2$ gives us the map ${\tilde Q}:\bigwedge\limits^2F\rightarrow \bigwedge\limits^2F^*$ in the hyperbolic form, that is, ${\tilde Q}=\left [\begin{smallmatrix}
	0 & {\rm I}_3\\
	{\rm I}_3 & 0
	\end{smallmatrix} \right ]$. Note that the quadratic form $Q: \bigwedge \limits^2 F \otimes_R \bigwedge \limits^2 F \rightarrow \bigwedge \limits^4 F$ is just the exterior multiplication.

We have $\bigwedge\limits^3(\bigwedge\limits^2 F)=S_{2,2,2,0}(F)\oplus S_{3,1,1,1}(F)$; see \cite[Proposition 2.3.9]{JW}, and  $R={\rm{Sym}}(F)$. Note that $R=\bigwedge\limits^4F\otimes \bigwedge\limits^4F$ and there is a map $(\bs m_{13} \otimes 1 \otimes 1)\circ (\Delta\otimes \Delta): R\rightarrow S_2F\otimes \bigwedge\limits^3 F \otimes \bigwedge\limits^3 F$ where $\bs m_{13}: F\otimes F\rightarrow S_2F$ is a multiplication map and $\Delta: \bigwedge\limits^i F\rightarrow  F\otimes \bigwedge\limits^{i-1} F $ is the diagonal map (for details; see \cite[Subsection 1.1.1]{JW}). 
Using $\Delta$, we get $\bs p_{24}: \bigwedge\limits^3 F \otimes \bigwedge\limits^3 F \twoheadrightarrow  \bigwedge\limits^3(\bigwedge\limits^2 F)$  by the commutativity of the following diagram 
\[
		\xymatrixrowsep{1.5pc} \xymatrixcolsep{6.0pc}
		\xymatrix{
			\bigwedge\limits^3 F \otimes \bigwedge\limits^3 F \ar@{->}[d] \ar@{->>}[r]^{\bs p_{24}}& \bigwedge\limits^3(\bigwedge\limits^2 F)\\
		\bigwedge\limits^2 F \otimes \bigwedge\limits^2 F \otimes \bigwedge\limits^2 F \ar@{->>}[ur]}\]
Thus there is a map $(1\otimes \bs p_{24}) \circ \sigma: R\rightarrow S_2F \otimes \bigwedge\limits^3(\bigwedge\limits^2 F)$, where $\sigma=(\bs m_{13} \otimes 1 \otimes 1)\circ (\Delta\otimes \Delta)$. Now we set $\bs a_3=(1\otimes \bs p_{24}) \circ \sigma$. It is clear that $\bs a_3$ goes to the summand $S_{2,2,2,0}(F)$ using the below diagram:
		\[
		\xymatrixrowsep{1.2pc} \xymatrixcolsep{2.2pc}
	\xymatrix{
		\bigwedge\limits^3 F \otimes \bigwedge\limits^3 F \ar@{->>}[dr]^{\bs p_{24}} \ar@{->}[r]\ar@{->}[d]  & \bigwedge\limits^2 F \otimes \bigwedge\limits^2 F \otimes \bigwedge\limits^2 F \ar@{->}[d]\\
		S_{2,2,2,0} F  \ar@{->}[r] & \bigwedge\limits^3(\bigwedge\limits^2 F) \\
	}
\]
In fact, it is the second symmetric power of the map
	$$\widetilde{\bs a}_3:R\rightarrow \bigwedge \limits^3 F$$ 
	sending $1$ to $x_1e_2\wedge e_3\wedge e_4-x_2 e_1\wedge e_3\wedge e_4+x_3e_1\wedge e_2\wedge e_4-x_4e_1 \wedge e_2\wedge e_3$. This
	last map gives us the spinor structure.



	
	Let us interpret this in terms of the root systems. Here we deal with a root system $D_3$ which is just $A_3$.  So the vector representation ${G}$ of rank $6$ can be considered as
	the second fundamental representation $\bigwedge\limits^2 {H}$ where ${H}$ is the $4$-dimensional space.
	Finding the structure map $\bs a_3$, we see that it is given by $R\rightarrow \bigwedge\limits^3 (\bigwedge\limits^2{H})$. The map $\widetilde {\bs a}_3$ is just the map from $R$ to ${H}$ and it allows us to identify ${H}$ and $F$. 
\end{example}


Next we look at  a hyperplane section of a codimension three Gorenstein ideal of Pfaffians of a skew-symmetric matrix.

\begin{example}\label{spinpfaff}  Let $R$ be a polynomial ring over ${\mathbb{Z}}$ on the entries of $(2n+1) \times (2n+1)$ skew-symmetric matrix $X=(x_{ij})$ with $x_{ij}=-x_{ji}$, and an additional variable $y$. Consider the resolution given by 
		\begin{equation}\label{ex:hyperplane}
		\mathbb F: (0\rightarrow R\xrightarrow{y}R\rightarrow 0)\otimes (0\rightarrow R\xrightarrow{\bs \partial_1^*} R^{2n+1}\xrightarrow{\bs \partial_2}R^{2n+1}\xrightarrow{\bs \partial_1}R\rightarrow 0)
		\end{equation}
		where $\bs \partial_1=\begin{bmatrix} (-1)^i {\rm Pf}([1,2n+1]\setminus \{ i \}, X) \end{bmatrix}$ where $[1, 2n + 1]=\{1, \ldots, 2n+1\}$ and $\bs \partial_2=X$. Here ${\rm Pf}(\mathcal{L},{X})$ denotes the Pfaffian of the submatrix of ${X}$ on rows and columns from $\mathcal{L}$ where $\mathcal L \subset [1, 2n + 1]$. Note that the resolution $\mathbb F$ in (\ref{ex:hyperplane}) is a hyperplane section of a codimension $3$ Gorenstein ideal of Pfaffians of a skew-symmetric matrix ${X}$.
		
	The matrix of the differential $\bs d_2:R^{2n+1} \oplus
	R^{2n+1} \rightarrow R \oplus R^{2n+1}$ of $\mathbb F$ is given by 
	$$
	{\bs d_2}=\begin{bmatrix}
	\bs \partial_1& 0\\
	-y{\rm I}_{2n+1} & {X}
	\end{bmatrix}.
	$$
	Our calculation directly gives $${\tilde Q}:R^{2n+1} \oplus R^{2n+1}\rightarrow (R^{2n+1} \oplus R^{2n+1})^{*}$$ which is in the hyperbolic form up to permutation, that is, ${\tilde Q}=\left [\begin{smallmatrix}
	0 & {\rm I}_{2n+1}\\
	{\rm I}_{2n+1} & 0\\
	\end{smallmatrix} \right ]$.
		
		\smallskip
	 	Denote the $i$th column of $\bs d_2$ by $e_i$. Set $e_{-i}=e_{2n+1+i}$. Hence the associated hyperbolic basis of the middle module $R^{2n+1} \oplus R^{2n+1}$ of $\mathbb F$ is $\{e_1, \ldots, e_{2n+1}, e_{-1}, \ldots, e_{-(2n+1)}\}$.
		
		
Let $\mathcal{I}$ and $\mathcal N$ be index sets of $[1, 2n + 1]=\{1, \ldots, 2n+1\}$ and $-\mathcal{I} \cup ([1, 2n + 1] \setminus \mathcal{I})$, respectively. Let $\mathcal J$ be an index set of $[1, 2n + 2]=\{1, \ldots, 2n+2\}$ where the cardinality of $\mathcal J$ is equal to $2n+1$, 
and let $(\bs d_2)_{\mathcal{J,} \mathcal{N}}$ denote the $(2n + 1)\times (2n + 1)$ minors $\bs d_2$ on rows $\mathcal J$ and columns $\mathcal N$. Note that $(2n + 1)\times (2n + 1)$ minors of the matrix $\bs d_2$ corresponding to $\mathcal{J}$ rows and $\mathcal N$ columns is of the form: 
$$(\bs d_2)_{\mathcal{J}, \mathcal{N}}=
\begin{cases} 
y^{2n+1-\ell(\mathcal {I})}({\rm Pf}(\mathcal I, X))^2, \;\; \text{if}  \; \mathcal J=[1,2n+2]\setminus \{1\}, \\
{\rm Pf}([1,2n+1]\setminus \{ i \}, X)y^{2n-\ell(\mathcal {I})}({\rm Pf}(\mathcal I, X))^2, \;\; \text{if}  \; \mathcal J=[1,2n+2]\setminus \{i\}.
\end{cases}
$$
We see that the coordinate of $\bs a_3$ corresponding to $\mathcal N$ is equal to $y^{2n-\ell(\mathcal {I})}({\rm Pf}(\mathcal I, X))^2$. This means that there exists a map $\widetilde{\bs a}_3$ from $R$ to a half-spinor representation
	${V}(\omega_{2n}, D_{2n+1})$ sending 1 to the combination of basis vectors ${v}_\mathcal{L}$ of weights
	$(\pm \frac{1}{2},\cdots,\pm \frac{1}{2})$ with $2n+1$ coordinates and even number of minuses (indicated by multi-index $\mathcal{L}$) with coefficients $$\pm y^{n-\ell(\mathcal{L})/2} {\rm Pf}(\mathcal{L},{X}).$$ 
	It is easy to check that the map $\widetilde{\bs a}_{3}$ gives the spinor structure on our resolution. If cardinality of $\mathcal{L}$ is $2n$, then ${\rm Pf}(\mathcal{L},{X})$ is the spinor coordinate. 
\end{example}

\begin{example}\label{tomjerry} In the 9-generator case, we have two examples of resolutions where none of the minimal generators are spinor coordinates. 
Note that we cannot explicitly get a hyperbolic basis over $\mathbb Q$, but, over $\mathbb C$, we can by Theorem \ref{mainthm}.(2).

\begin{enumerate}
\item The ring $R$ is a polynomial ring  in 9 variables on the entries of  $3\times 3$ generic matrix $X$ over a field $K$ of characteristic different than $2$. The ideal $I$ is generated by $2\times 2$-minors of the matrix $X$. 
\item The ring $R$ is a polynomial ring in 8 variables over a field $K$ of characteristic different from $2$ and $I$ is the ideal of the generated by the equation of the Segre embedding $\mathbf P^1\times \mathbf P^1\times \mathbf P^1$ into $\mathbf P^7$.
\end{enumerate}
	We observe that degree of all spinor coordinates is $3$ whereas minimal generators of $I$ are of degree $2$. Hence none of the minimal generators of $I$ are spinor coordinates. \end{example}

\section{Generic doubling of almost complete intersection and Kustin-Miller model }\label{sec:doublingsci}

In this section, we discuss generic doubling of almost complete intersection of codimension $3$ which leads to a specialization of Kustin-Miller model.
All polynomial rings $R$ are over a field $K$ of arbitrary characteristic. The spinor coordinates still exist since they are obtained by direct calculations.

\subsection{Kustin-Miller Model (KMM)} \label{KMmodel}

Next we recall the well-known Kustin-Miller family of ideals associated to a $3\times 4$ matrix, a $4$-vector and a variable. For details, see \cite{KM1,KM3}. \\
Let $R$ be a polynomial ring over $K$ with the indeterminates as the entries of $X$ and $M$ where
$${X}=\left[\begin{smallmatrix*} x_1 \\ x_2 \\ x_3 \\ x_4\end{smallmatrix*}\right]\;\; \text{and}\;\;  M=\begin{bmatrix*} a_{11} & a_{12} & a_{13} & a_{14} \\
		a_{21} & a_{22} & a_{23} & a_{24} \\
		a_{31} & a_{32} & a_{33} & a_{34} 
		\end{bmatrix*}.$$ 
		We set $q_i=\sum_{j=1}^4 a_{ij} x_j=a_{i1}x_1+a_{i2}x_2+a_{i3}x_3+a_{i4}x_4$ and
		\begin{align*}
		I&=\langle  q_1, q_2, q_3, x_1v+{ M}_{123; 234}, x_2v-{ M}_{123; 134}, x_3v+{M}_{123; 124}, x_4v-{M}_{123; 123}\rangle,
		\end{align*}
 where ${M}_{\mathcal{K}; \mathcal{L}}$ is the minor of the submatrix of ${M}$ involving $\mathcal{K}$ rows and $\mathcal{L}$ columns.  Let $\bs s$ be an $12\times 12$ exchange matrix with entries of the form $$\bs s_{ij}=\begin{cases} 1, & j=12-i+1\\ 
0, & j\neq 12-i+1.
\end{cases}$$
Note that $\bs s$ can be put in the form ${\bs {\tilde Q}}=\begin{bmatrix}
		0 & {\rm I}_{6}\\
		{\rm I}_6 & 0\\
		\end{bmatrix}
		$ up to permutation of columns.
Then minimal free resolution for $I$ is given by
		\[0\rightarrow R\xrightarrow{{\bs d}_1^t} R^7 \xrightarrow{\bs {s}{\bs D^t}}R^{12}\xrightarrow{{\bs D}}R^7 \xrightarrow{{\bs d}_1}R\rightarrow R/I\rightarrow 0\]
		where  
		$${\bs d}_1=\begin{bmatrix}
		q_1 & q_2 & q_3 & x_1v+{M}_{123; 234} & x_2v-{M}_{123; 134} & x_3v+{M}_{123; 124} & x_4v-{M}_{123; 123}\\
		\end{bmatrix}
		,$$  and the matrix ${\bs D}$ is
		{\small
		\setcounter{MaxMatrixCols}{12}
		$$\begin{bmatrix*} -q_2& -q_3&0& {M}_{23; 34} & {M}_{23; 24} & {M}_{23; 23} & {M}_{23; 14} & {M}_{23; 13} & {M}_{23; 12} & -v & 0 & 0\\ 
		q_1 & 0 &-q_3& -{M}_{13; 34} & -{M}_{13; 24} & -{M}_{13; 23} & -{M}_{13; 14} & -{M}_{13; 13} & -{M}_{13; 12} & 0 & -v & 0 \\ 0& q_1 & q_2& {M}_{12; 34} & {M}_{12; 24} & {M}_{12; 23} & {M}_{12; 14} & {M}_{12; 13} & {M}_{12; 12} & 0 & 0 & -v\\ 0& 0 & 0& -x_2 & x_3 & -x_4 & 0 & 0 & 0& a_{11} & a_{21} & a_{31}\\ 
		0& 0 & 0& x_1 & 0 & 0 & -x_3 & x_4 & 0& a_{12} & a_{22} & a_{32}\\
		0& 0 & 0& 0 & -x_1 & 0 & x_2& 0 & -x_4 & a_{13} & a_{23} & a_{33}\\
		0& 0 & 0& 0 & 0 & x_1 & 0 & -x_2 & x_3 & a_{14} & a_{24} & a_{34}
		\end{bmatrix*}.$$

\subsection{Spinor Coordinates of the Kustin-Miller model}
In this subsection, we calculate the spinor coordinates on the Kustin-Miller model.

\begin{proposition}\label{Kmspin} The spinor coordinates of KMM with 7 generators are given in Table \ref{tab:7gens}.
	
\end{proposition}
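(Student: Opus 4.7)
The plan is to realize $\operatorname{im}(d_3)$ as an explicit point of the isotropic Grassmannian and read off its spinor coordinates as Pfaffians of a single skew-symmetric $6\times 6$ matrix. Since $\mu(I) = 7$, we have $F_2 = R^{12}$ and the half-spinor representation $V(\omega_{6})$ for $D_6$ has dimension $2^{5}=32$, so there are $32$ spinor coordinates to compute.

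First I would fix the hyperbolic basis $\{e_1,\dots,e_6,e_{-1},\dots,e_{-6}\}$ of $F_2 = R^{12}$ coming from the exchange matrix $\bs s$: after a permutation of columns, $\bs s$ becomes the standard hyperbolic matrix $\bs{\tilde Q}$, and in the resulting basis the $12\times 7$ matrix of $d_3 = \bs s \bs D^t$ is obtained from the formula for $\bs D$ in Subsection \ref{KMmodel} by transposition and reindexing. The quadratic form on $F_2$ is then already in the form required by Theorem \ref{mainthm}.

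Next, by Theorem \ref{mainthm}, $\operatorname{im}(d_3)$ is a rank-$6$ isotropic submodule lying in the Spin$(12)/P_{5}$ component of the isotropic Grassmannian. Working over the fraction field $R_{(0)}$, I would perform column operations on $d_3$ (that is, choose a new basis of the free module $F_3$) to bring $\operatorname{im}(d_3)$ into the canonical form $M = (J \mid X)$ of Subsection \ref{isotropic}, where $J$ is anti-diagonal with unit entries and $X$ is a $6\times 6$ skew-symmetric matrix. By the discussion in Subsection \ref{isotropic}, the spinor coordinate $\tilde{\bs a}_{3,\mathcal L}$ indexed by a subset $\mathcal L \subset [1,6]$ of even cardinality is then precisely $\operatorname{Pf}(\mathcal L, X)$, with $\tilde{\bs a}_{3,\emptyset}=1$. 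Writing these Pfaffians out in terms of the indeterminates $a_{ij}, x_i, v$ produces the entries of Table \ref{tab:7gens}. As an internal cross-check, Remark \ref{rmkmainthm} predicts $\tilde{\bs a}_{3,\mathcal J}^{\,2} = \pm\,\bs a_{3,\,-\mathcal J \cup \mathcal J^c}$ for each even $\mathcal J$, where the right-hand side is the corresponding $6\times 6$ minor of $d_3$ computed directly from Theorem \ref{th1}; matching these minors against the squared Pfaffians confirms both the entries of the table and the choice of connected component.

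The main obstacle is the reduction step: a priori the entries of $X$ only lie in $R_{(0)}$ rather than in $R$, since bringing the image into $(J\mid X)$ form requires inverting a $6\times 6$ block of $d_3$. One must argue, exactly as in the proof of Theorem \ref{mainthm} using normality of $R$ and the fact that each spinor coordinate squared is a Buchsbaum-Eisenbud multiplier in $R$, that denominators clear and the resulting Pfaffians are honest polynomials in $a_{ij}, x_i, v$. Beyond this, the remaining work is essentially sign bookkeeping: tracking signs of the sgn$(\mathcal J,\mathcal L)$ terms in Lemma \ref{lem:mapp} so that the parities of nonvanishing coordinates agree with the convention of Remark \ref{rmkmainthm}.
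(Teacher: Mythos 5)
Your overall plan --- fix the hyperbolic basis, locate $\operatorname{im}(d_3)$ as a point of the isotropic Grassmannian, and read off spinor coordinates --- is sound and does lead to Table \ref{tab:7gens}. But the route you take is genuinely different from the paper's, and as written it stumbles on a concrete point.

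The paper's proof never puts $d_3$ into the $(J\mid X)$ normal form of Subsection \ref{isotropic}. It fixes the hyperbolic basis $\{e_1,\dots,e_6,e_{-1},\dots,e_{-6}\}$ of $R^{12}$ coming from $\bs s$, computes the $6\times 6$ minors of $\bs D$ directly to get the Buchsbaum--Eisenbud multipliers $\bs a_{3,\mathcal K}$, observes from that computation that the nonzero ``full-index'' multipliers are those with $\mathcal K=-\mathcal J\cup\mathcal J^c$ for $\mathcal J\subset[1,6]$ of \emph{odd} cardinality, and then extracts the spinor coordinates as square roots of those multipliers (the case also mentioned in the second half of Remark \ref{rmkmainthm}). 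Your Pfaffian picture would instead pass through the big cell of the component $\operatorname{Spin}(12)/P_5$, which is a more conceptual route but adds the $(J\mid X)$ reduction step over $R_{(0)}$ and a denominator-clearing argument.

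The concrete problem: in your setup the $(J\mid X)$ reduction requires the Pl\"ucker coordinate on $e_1,\dots,e_6$, i.e.\ the multiplier $\bs a_{3,\{1,\dots,6\}}=\widetilde{\bs a}_{3,\emptyset}^{\,2}$, to be nonzero, and you explicitly normalize $\widetilde{\bs a}_{3,\emptyset}=1$. But for KMM this multiplier vanishes: the image of $d_3$ sits in the \emph{other} connected component, the one where the nonvanishing spinor coordinates are indexed by subsets $\mathcal J$ of \emph{odd} cardinality (this is precisely what Table \ref{tab:7gens} records --- all its index sets have odd size, and $\emptyset$ does not appear). So the column reduction you describe does not produce an invertible anchor block, and the cross-check against Remark \ref{rmkmainthm} with ``even $\mathcal J$'' would come back identically zero. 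This is not just sign bookkeeping: you need to re-anchor the big cell, e.g.\ by swapping one hyperbolic pair $e_1\leftrightarrow e_{-1}$ (passing to $\operatorname{Spin}(12)/P_6$) or by choosing the Pl\"ucker coordinate on $e_{-1},e_2,\dots,e_6$ as the unit, before the Pfaffian-reading step can begin. With that correction in place, the rest of your argument (normality of $R$ clearing denominators, matching squared Pfaffians to minors of $\bs D$) does work and would reproduce the table.
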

\begin{proof}
	We find the Buchsbaum-Eisenbud map $\bs a_3$. Let us denote the $i$th column of the matrix ${\bs D}$ in Section~\ref{KMmodel} by $e_i$ and $e_{-i}=e_{13-i}$. Then $\{e_1,\ldots,e_6,e_{-1},\ldots, e_{-6}\}$ is the associated hyperbolic basis of $R^{12}$. 
	
	Computing $6\times 6$ minors of ${\bs D}$,  we see that the coordinates of $\bs a_3$ corresponding to the multi-index $\mathcal{J}\cup \mathcal{J}^c$ where $\mathcal{J}\subset [1,6]$ are of odd cardinality.  Next we record nonzero spinor coordinates of $\widetilde{\bs a}_3$ as follows:
		\begin{table}[h]{\renewcommand{\arraystretch}{1.3}
			\caption{Spinor coordinates of KMM with 7 generators}
			\label{tab:7gens}
			\scalebox{1}{
			\begin{tabular}{|l|c|}
			\hline
				\multicolumn{1}{|c|}{Cases}& $\widetilde{\bs a}_{3,\mathcal{J}}$\\
			\hline
				$\mathcal{J}=\{i\}$ for $i=1,2,3$ &  $\pm x_1q_i$ \\
				\hline
				$\mathcal{J}=\{5,6\}\cup \{i\}$ for $i=1,2,3$ & $\pm x_2q_i$\\
				\hline
				$\mathcal{J}=\{4,6\}\cup \{i\}$ for $i=1,2,3$ & $\pm x_3q_i$ \\
				\hline 
				$\mathcal{J}=\{4,5\}\cup \{i\}$ for $i=1,2,3$ & $\pm x_4q_i$\\
				\hline 
				$\mathcal{J}=\{1,2,3\}$ & $(x_1v+{M}_{123; 234})$\\
				\hline
				$\mathcal{J}=\{1,2,3,5,6\}$ & $ (x_2v-{M}_{123; 134})$\\
				\hline 
				$\mathcal{J}=\{1,2,3,4,6\}$ & $(x_3v+{M}_{123; 124})$\\
				\hline
				$\mathcal{J}=\{1,2,3,4,5\}$ & $ (x_4v-{M}_{123; 124})$\\
				\hline
				$\mathcal{J}=\{k,\ell \}\cup \{r\}$, $k,\ell\in \{1,2,3\}$ $r\in\{4,5,6\}$ & $(a_{mj}q_n-a_{nj}q_m)$ $m,n\in \{1,2,3\}$, $j\in \{1,2,3,4\}$\\
				\hline
			\end{tabular}}}
		\end{table}
	
	This means that there exists a map $\widetilde{\bs a}_3$ from $R$ to half-spinor representation
	$V(\omega_{6}, D_{6})$ sending 1 to the combination of basis vectors $v_{\mathcal{L}}$ of weights
	$(\pm \frac{1}{2},\cdots,\pm \frac{1}{2})$ with coordinates $\widetilde{\bs a}_{3,\mathcal{L}}$ and odd number of minuses (indicated by multi-index $\mathcal{L}$). One can check that the map $\widetilde{\bs a}_{3}$ gives the spinor structure on the resolution in Section \ref{KMmodel}. From Table~\ref{tab:7gens}, observe that four minimal generators are spinor coordinates.  
	%
\end{proof}

\subsection{Generic doubling of an almost complete intersection}\label{genericdoublingaci}
Generic doubling of an almost complete intersection of codimension $3$ leads to a specialization of the KMM. After deformation of such a specialization one gets the KMM given in Section~\ref{KMmodel}.

Let $R=K[c_{ij},u_{kl}]$ be a polynomial ring over $K$ where the variables $c_{ij}$ are skew-symmetric   in $i,j$ and variables $u_{kl}$ are generic variables for $1\leq k,l\leq 3$. Consider a $3\times 3$ generic skew symmetric matrix $C=(c_{ij})$ and a generic matrix as
$$
{N}=\begin{bmatrix}
-u_{11} & u_{12} & -u_{13}\\
-u_{21} & u_{22} & -u_{23}\\
-u_{31} & u_{32} & -u_{33} 
\end{bmatrix}.$$
Let $J=\langle q_1,q_2,q_3,-N_{123;123}\rangle$ where $q_1=c_{23}u_{11}-c_{13}u_{12}+c_{12}u_{13},$
$q_2=c_{23}u_{21}-c_{13}u_{22}+c_{12}u_{23},$
$q_3=c_{23}u_{31}-c_{13}u_{32}+c_{12}u_{33},$ 
and where ${N}_{\mathcal{J};\mathcal{K}}$ is the submatrix of ${N}$ involving $\mathcal{J}$ rows and $\mathcal{K}$ columns.  By  \cite[Proposition 2.4 ]{CVW}, we get a minimal free resolution of $R/J$ as 
\begin{equation}\label{aci2}
\mathbb{F}: 0\rightarrow R^3\xrightarrow{{\bs d}_3} R^{6}\xrightarrow{{\bs d}_2} R^{4}\xrightarrow{{\bs d}_1} R\rightarrow R/J\rightarrow 0
\end{equation}
where 
\begin{align*}
{\bs d}_1&=\begin{bmatrix}
q_1 & q_2 & q_3 & -{N}_{123;123}
\end{bmatrix},\\[.5em]
{\bs d}_2&=\begin{bmatrix}
-q_2 & -q_3 & 0 & {N}_{23;12} & {N}_{23;13} & {N}_{23;23}\\
q_1 & 0 & -q_3 & -{N}_{13;12} & -{N}_{13;13} & -{N}_{13;23}\\
0 & q_1 & q_2 & {N}_{12;12} & {N}_{12;13} & {N}_{12;23}\\
0 & 0 & 0 & -c_{12} & c_{13} & -c_{23}\\
\end{bmatrix},\\[.5em]
{\bs d}_3&=\begin{bmatrix}
0& -c_{12} & c_{13}\\
c_{12} & 0 & -c_{23}\\
-c_{13} & c_{23} & 0\\
-u_{11} & u_{12} & -u_{13}\\
-u_{21} & u_{22}& -u_{23}\\
-u_{31}& u_{32} & -u_{33}.
\end{bmatrix}
\end{align*}
Next we study the generic doubling of the resolution in (\ref{aci2}). Applying ${\rm Hom}_R(-,R)$ to $\mathbb{F}$, one gets an acyclic complex
\[\mathbb{F}^{*}: 0\rightarrow R\xrightarrow{{\bs d}_3^{*}} R^{4}\xrightarrow{{\bs d}_2^{*}} R^{6}\xrightarrow{{\bs d}_1^{*}} R^3\rightarrow \omega_{R/J}\rightarrow 0\] 
where ${\bs d}_1^*=-{\bs d}_3^T$, ${\bs d}_2^*=-{\bs d}_2^T$ and ${\bs d}_3^*=-{\bs d}_1^T$. We compute  ${\rm Hom}_{R/J}(\omega_{R/J},R/J)$ by using Macaulay 2 \cite{GS},  which is generated by the image of the
following matrix: 
$$\mathcal{H}=\begin{bmatrix}
-c_{23} & {N}_{23;23} & {N}_{13;23} & {N}_{12;23}\\
-c_{13} &-{N}_{23;13}  & -{N}_{13;13} & -{N}_{12;13}\\
-c_{12} & {N}_{23;12} & {N}_{13;12} & {N}_{12;12}\\
\end{bmatrix}.
$$
Let $\widetilde{R}=R[\tau_1,\tau_2,\tau_3,\tau_4]$. We set 
\begin{align*}
s_1&=\tau_4c_{23}+\tau_1{N}_{23;23}+\tau_2{N}_{13;23}+\tau_3{N}_{12;23},\\[.1em]
s_2&=\tau_4c_{13}-\tau_1{ N}_{23;13}-\tau_2{N}_{13;13}-\tau_3{N}_{12;13},\\[.1em]
s_3&=\tau_4c_{12}+\tau_1{N}_{23;12}+\tau_2{N}_{13;12}+\tau_3{N}_{12;12}.
\end{align*}
Let ${M}^\prime=\begin{bmatrix}
-u_{11} & u_{12} & -u_{13}& \tau_1\\
-u_{21} & u_{22} & -u_{23}& -\tau_2\\
-u_{31} & u_{32} & -u_{33} & \tau_3
\end{bmatrix}$. Take $ {\bs \psi}_1=\begin{bmatrix}
s_1 & s_2 &s_3\\
\end{bmatrix}
$, and $${\bs\psi}_2=\begin{bmatrix}
	{M}^\prime_{23;14} & {M}^\prime_{23;24} & {M}^\prime_{23;34} & -\tau_4 & 0 & 0\\
	-{ M}^\prime_{13;14} & -{M}^\prime_{13;24} & -{M}^\prime_{13;34} & 0 & -\tau_4 & 0\\
	{M}^\prime_{12;14} & {M}^\prime_{12;24} & { M}^\prime_{12;34} & 0 & 0 &-\tau_4\\
	0 & 0 & 0 & -\tau_1 & \tau_2 & -\tau_3\\
	\end{bmatrix}, $$ where ${M}^\prime_{\mathcal{K};\mathcal{L}}$ denotes the minor of $M'$ involving $\mathcal{K}$ rows and $\mathcal{L}$ columns.

Set ${\bs \psi}_3=-{\bs \psi}_2^T$ and ${\bs \psi}_4=-{\bs \psi}_1^T$. Then $\overline {{\bs \psi}}_1:  \omega_{\widetilde{R}/J\widetilde{R}} \rightarrow \widetilde{R}/J\widetilde{R}$ lifts to  the  chain map $\bs \psi:\mathbb{\widetilde{F}}^*\rightarrow \mathbb{\widetilde{F}}$ of complexes as follows:
\[
\xymatrixrowsep{1.3pc} \xymatrixcolsep{2.2pc}
\xymatrix{
	\mathbb{\widetilde{F}}:0\ar@{->}[r]^{}&\widetilde{R}^3\ar@{->}[r]^{{\bs d}_3}&\widetilde{R}^6\ar@{->}[r]^{{\bs d}_2}& \widetilde{R}^4\ar@{->}[r]^{{\bs d}_1}&\widetilde{R}\ar@{->}[r]^{}&\ar@{->}[r]^{} \widetilde{R}/J\widetilde{R}&0\\
	\mathbb{\widetilde{F}}^*:0\ar@{->}[r]^{}&\widetilde{R}\ar@{->}[r]^{{\bs d}_3^*}\ar@{->}[u]^{{\bs \psi}_4}&\widetilde{R}^4\ar@{->}[r]^{{\bs d}_2^*}\ar@{->}[u]^{{\bs \psi}_3}& \widetilde{R}^6\ar@{->}[u]^{{\bs \psi}_2}\ar@{->}[r]^{{\bs d}_1^*}&\widetilde{R}^3\ar@{->}[r]^{}\ar@{->}[u]^{{\bs \psi}_1}&\omega_{\widetilde{R}/J\widetilde{R}}\ar@{->}[r]^{}\ar@{->}[u]^{\overline {{\bs \psi}}_1}&0\\
}
\]

Let $I=J\widetilde{R}+\langle s_1,s_2,s_3 \rangle$. Then the mapping cone with respect to ${\bs \psi}$ gives us a complex of the form
\begin{equation}\label{resKM}
\mathcal{C}({\bs \psi}):0\rightarrow \widetilde{R}\xrightarrow{{\bs \delta}_4} \widetilde{R}^7\xrightarrow{{\bs \delta}_3}\widetilde{R}^{12}\xrightarrow{{\bs \delta}_2} \widetilde{R}^7\xrightarrow{{\bs \delta}_1} \widetilde{R}\rightarrow \widetilde{R}/I\rightarrow 0
\end{equation}

where $${\bs \delta}_1=\begin{bmatrix}
{\bs d}_1 & {\bs \psi}_1
\end{bmatrix}, \;\; 
{\bs \delta}_2=\begin{bmatrix}
{\bs d}_2 &{\bs \psi}_2\\
{\bf 0} & -{\bs d}_3^T\\
\end{bmatrix}, \;\;
{\bs \delta}_3=\begin{bmatrix}
{\bs d}_3 & -{\bs \psi}_2^T\\
0 & -{\bs d}_2^T\\
\end{bmatrix},\;\;
{\bs \delta}_4=\begin{bmatrix}
-{\bs \psi}_1^T\\
-{\bs d}_1^T\\
\end{bmatrix}.$$

\begin{theorem}
		If we substitute $x_1=-c_{23}$, $x_2=-c_{13}$, $x_3=-c_{23}$, $x_4=0$ and sending ${M}$ to ${M}^\prime$ in Section~\ref{KMmodel}, then the resolution in (\ref{resKM})  is a specialization of KMM. 
\end{theorem}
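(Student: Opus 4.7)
The plan is to explicitly perform the prescribed substitution on the Kustin--Miller complex from Subsection~\ref{KMmodel} and match the result, after a permutation of bases and sign adjustments, with the mapping cone $\mathcal{C}(\bs\psi)$ in (\ref{resKM}). Both complexes have the same shape $(1,7,12,7,1)$, and both are self-dual through the hyperbolic form on the middle free module, so once the specialization is fixed, it suffices to verify that the first two differentials $\bs d_1, \bs D$ of KMM descend to $\bs \delta_1, \bs \delta_2$ of the doubling.

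First I would complete the specialization. In addition to the rules stated in the theorem (with the correction $x_3 \mapsto -c_{12}$ in place of the apparent typo $-c_{23}$), the KMM variable $v$ must be sent to $-\tau_4$. This is forced by comparing $x_1v + M_{123;234}$ with $s_1$: since $M'$ shares its first three columns with $N$, Laplace expansion of $M'_{123;234}$ along its fourth column $(\tau_1,-\tau_2,\tau_3)$ equals $\tau_1 N_{23;23} + \tau_2 N_{13;23} + \tau_3 N_{12;23}$, so $-c_{23}v + M'_{123;234}=s_1$ forces $v\mapsto -\tau_4$. The same Laplace trick identifies each of the other two generators $x_i v \pm M_{123;\widehat{i}}$ with $\pm s_i$, while $x_4 v - M_{123;123}$ collapses to $-N_{123;123}$ using $x_4=0$ and $M'_{123;123}=N_{123;123}$. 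Thus the KMM generators $(1,\ldots,7)$ correspond to the $\mathcal{C}(\bs\psi)$ generators in the order $(1,2,3,5,6,7,4)$, which recovers $\bs \delta_1$ from the specialized $\bs d_1$ by a single permutation of entries.

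Next I would match $\bs D$ with $\bs \delta_2 = \bigl[\begin{smallmatrix}\bs d_2 & \bs\psi_2\\ 0 & -\bs d_3^T\end{smallmatrix}\bigr]$ column by column, after applying the row permutation dual to the generator permutation above. Columns $1$--$3$ of $\bs D$ are the Koszul relations on $q_1, q_2, q_3$ and descend directly to columns $1$--$3$ of $\bs d_2$. Columns $4$--$9$ of $\bs D$ carry the $2\times 2$ minors of $M$ together with entries $\pm x_j$; under the substitution the $\pm x_j$ become entries of $\pm C$, and the $2\times 2$ minors of $M$ involving the fourth column split into $\tau$-linear combinations of $2\times 2$ minors of $N$ via Laplace expansion, yielding columns $4$--$6$ of $\bs d_2$ together with columns $1$--$3$ of $\bs\psi_2$. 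Columns $10$--$12$ of $\bs D$, which carry $-v$ on a $3\times 3$ diagonal and the entries of $M$ in the bottom block, specialize under $v \mapsto -\tau_4$ to columns $4$--$6$ of $\bs\psi_2$ (containing $-\tau_4$ and the $\tau_i$) stacked on the bottom block $-\bs d_3^T$ (containing the $c_{ij}$ and the entries of $N$). Self-duality of both complexes under the hyperbolic pairing then upgrades the matching of $\bs d_1, \bs D$ to a matching of the full resolutions.

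The chief obstacle is purely bookkeeping: one must track signs through the row and column permutations, and consistently translate between $k\times k$ minors of $M'$ (via Laplace expansion along its fourth column) and the $\tau$-linear combinations of minors of $N$ appearing in $\bs \psi_2$, in $s_1, s_2, s_3$, and in the bottom block of $\bs\delta_2$. Once that dictionary is in place, the specialization-then-basis-change claim is verified column by column, and one concludes that (\ref{resKM}) is indeed a specialization of the KMM.
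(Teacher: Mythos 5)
The paper gives no proof of this theorem, leaving it as a routine verification. Your proposal is the natural direct-computation argument, and it is essentially correct; it also supplies two pieces of information the paper leaves implicit: that $x_3\mapsto -c_{12}$ (fixing the evident typo $-c_{23}$ in the statement) and that the KMM variable $v$ must be sent to $-\tau_4$, and it correctly uses Laplace expansion of minors of $M'$ along the fourth column to translate between the $3\times 3$ and $2\times 2$ minors of $M'$ appearing in the KMM matrices and the $\tau$-linear combinations of minors of $N$ appearing in $\bs\psi_1$, $\bs\psi_2$. One small caution: some columns match only up to a sign, so the basis change on the middle module is a signed permutation rather than a bare permutation; and the final appeal to self-duality to upgrade agreement of $\bs d_1,\bs D$ to agreement of the whole complexes needs the remark that both resolutions are built with $d_3 = \tilde Q d_2^T$ and $d_4 = d_1^T$ in compatible hyperbolic bases, which your opening sentence invokes but does not spell out.
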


In the next theorem, we give deformed ideal 
\begin{equation}\label{I(t)} I(t)=\langle q_1+t\tau_1 , q_2-\tau_2t, q_3+\tau_3t, -\det(N)-\tau_4t, s_1, s_2, s_3\rangle
\end{equation}
in the bigger polynomial ring $S=\widetilde{R}[t]$. Further, we show that the resolution of $I(t)$ is KMM. 

\begin{theorem}\label{KMgen}
		The ideal $I(t)$ above (in \ref{I(t)}) is Gorenstein  of codimension $4$ in $S$. The minimal resolution of $S/I(t)$ over $S$ is 
		\[0\rightarrow S\xrightarrow{{\bs \delta}_4(t)} S^7\xrightarrow{{\bs \delta}_3(t)}S^{12}\xrightarrow{{\bs \delta}_2(t)} S^7\xrightarrow{{\bs \delta}_1(t)} S\rightarrow S/I(t)\rightarrow 0.\] 
\end{theorem}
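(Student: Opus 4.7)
The approach is to realize the data $(I(t), S, (\bs\delta_i(t)))$ as the Kustin--Miller model (KMM) of Section~\ref{KMmodel} via a $K$-algebra isomorphism of polynomial rings. Let $A = K[x_1, x_2, x_3, x_4, (a_{ij})_{1 \leq i \leq 3,\, 1 \leq j \leq 4}, v]$ denote the KMM polynomial ring, and observe that both $A$ and $S = K[c_{12}, c_{13}, c_{23}, (u_{kl})_{1 \leq k, l \leq 3}, \tau_1, \tau_2, \tau_3, \tau_4, t]$ are polynomial rings in exactly $17$ variables over $K$. I would define $\eta : A \to S$ by $x_1 \mapsto -c_{23}$, $x_2 \mapsto -c_{13}$, $x_3 \mapsto -c_{12}$, $x_4 \mapsto t$, $v \mapsto -\tau_4$, and by matching the twelve $a_{ij}$'s bijectively with the (signed) entries of the matrix $M'$ of Subsection~\ref{genericdoublingaci} (so that $a_{ij}$ is sent to the $(i,j)$-entry of $M'$ for $1 \leq j \leq 4$). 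Because this prescription is a bijection on generators up to sign, $\eta$ is a $K$-algebra isomorphism $A \xrightarrow{\sim} S$.

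Next, I would verify directly that $\eta$ maps the seven KMM generators to the seven generators of $I(t)$. The relation $q_i^{KMM} = \sum_{j=1}^4 a_{ij} x_j$ becomes $q_i \pm t\tau_i$, the extra $\pm t\tau_i$ being contributed by the fourth column of $M'$ under $a_{i4} x_4 \mapsto (\pm \tau_i)(t)$. The generator $x_4 v - M_{123;123}$ becomes $-t\tau_4 - \det(N)$, using that the first three columns of $M'$ are precisely $N$. For $i = 1, 2, 3$, the three KMM generators $x_i v + (\pm M_{123; \cdot})$ become $s_1, s_2, s_3$, with the $\tau_4 c_{jk}$ terms in $s_i$ arising from $x_i v \mapsto (\mp c_{jk})(-\tau_4)$. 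An analogous matching of matrix entries, performed block by block against the formulas for $\bs d_1$, $\bs D$, $\bs s \bs D^t$, and $\bs d_1^t$ in Section~\ref{KMmodel}, shows that $\eta$ also carries the KMM differentials to the matrices $\bs\delta_1(t), \bs\delta_2(t), \bs\delta_3(t), \bs\delta_4(t)$ of the theorem.

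Since $\eta$ is a $K$-algebra isomorphism and the KMM is a minimal free resolution of a codimension-$4$ Gorenstein ring, transporting the KMM resolution along $\eta$ yields at once that $I(t)$ is a Gorenstein ideal of codimension $4$ in $S$ and that the displayed complex is its minimal free resolution over $S$. The main obstacle here is not a depth estimate or an application of the Buchsbaum--Eisenbud acyclicity criterion (both of which are inherited through the isomorphism $\eta$) but the combinatorial bookkeeping of signs: one has to choose the signs $v \mapsto -\tau_4$ and $a_{ij} \mapsto \pm u_{kl}$ (and $a_{i4} \mapsto \pm \tau_i$) consistently so that the KMM generators \emph{and} each entry of the four KMM differentials are transported to the precise expressions appearing in $I(t)$ and the $\bs\delta_i(t)$. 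Once this matching of signs is pinned down, the theorem is immediate from the fact that KMM is already known to be a minimal free resolution of a Gorenstein ideal of codimension $4$.
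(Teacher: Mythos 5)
Your approach differs substantively from the paper's. The paper proves the theorem by a deformation argument: it checks that $\operatorname{im}([\bs\delta_1(t)]^T)=\ker([\bs\delta_2(t)]^T)$ directly, invokes the Buchsbaum--Eisenbud acyclicity criterion, and establishes the required depth estimates $\depth I(\bs\delta_i(t))\ge 4$ by reducing modulo $t$, where the complex collapses to the already-verified resolution (\ref{resKM}) (so $(I(\bs\delta_i(t)),t)=(I(\bs\delta_i),t)$ has depth at least $5$). You instead observe that the substitution the paper records only as a ``specialization'' in the remark following the theorem ($x_1\mapsto -c_{23}$, $x_2\mapsto -c_{13}$, $x_3\mapsto -c_{12}$, $x_4\mapsto t$, $v\mapsto -\tau_4$, $M\mapsto M'$) is in fact a bijection on variables up to sign --- both polynomial rings have $17$ generators --- hence a $K$-algebra isomorphism $\eta\colon A\to S$, and you transport the KMM resolution along $\eta$. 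This buys you the Gorenstein and codimension-$4$ conclusions immediately, and avoids the acyclicity criterion and the depth computation entirely. The insight that introducing $t$ promotes the specialization to an isomorphism is the useful structural content here and is not spelled out in the paper.

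However, the assertion that ``$\eta$ carries the KMM differentials to the matrices $\bs\delta_1(t),\dots,\bs\delta_4(t)$'' is not literally true as stated and needs repair. For instance, $\eta$ sends the ordered list of KMM generators $(q_1,q_2,q_3,\,x_1v+M_{123;234},\,x_2v-M_{123;134},\,x_3v+M_{123;124},\,x_4v-M_{123;123})$ to $(q_1+t\tau_1,\,q_2-t\tau_2,\,q_3+t\tau_3,\,s_1,\,s_2,\,s_3,\,-\det(N)-\tau_4 t)$, whereas the paper's $\bs\delta_1(t)$ places $-\det(N)-\tau_4 t$ in the fourth slot and $s_1,s_2,s_3$ in slots five through seven; so $\eta(\bs d_1^{\rm KMM})$ and $\bs\delta_1(t)$ already differ by a cyclic permutation of columns $4,5,6,7$. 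Compatible permutations (and possibly signs) of the bases of $S^{7}$, $S^{12}$, $S^{7}$, and $S$ are then required so that the whole transported complex coincides with the displayed one. A change of basis of course does not affect exactness, so this is a fixable gap --- but since the theorem asserts exactness of a specific sequence of matrices (not merely that \emph{some} minimal resolution of $S/I(t)$ exists), these bookkeeping choices must be made explicit before the argument is complete. Once they are pinned down, your route is valid and shorter than the one in the paper.
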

\begin{proof}
	Set $\lambda=(\tau_1,-\tau_2,\tau_3,-\tau_4,0,0,0)$ and $S=\widetilde{R}[t]$. Then  deformation of ideal $I$ along $\lambda$ is  $I(t)={\rm im}({\bs \delta}_1 (t))$ where 
	$${\bs \delta}_1(t)={\bs \delta}_1+ t\lambda,\; {\bs \delta}_2(t)=\begin{bmatrix}
	{\bs d}_2(t) &{\bs \psi}_2\\
	{\bs \phi}_2 & -{\bs d}_3^T\\
	\end{bmatrix},\; {\bs \delta}_3(t)=\begin{bmatrix}
	{\bs d}_3 & -{\bs \psi}_2^T\\
	-{\bs \phi}_2^T & -{\bs d}_2^T(t)\\
	\end{bmatrix}, $$
	$${\bs \delta}_4=\begin{bmatrix}
		-{\bs \psi}_1^T\\
		-{\bs d}_1^T(t)\\
	\end{bmatrix},\; \text{and} \;{\bs \phi}_2=\begin{bmatrix}0 & 0 & 0 &-t &0 &0\\
	0& 0& 0&0 & t & 0\\
	0& 0& 0&0 & 0 & -t \end{bmatrix}.$$
	
	By direct computation, we see that ${\rm im}([{\bs \delta}_1(t)]^T)=\ker([{\bs\delta}_2(t)]^T)$. Now we use Buchsbaum-Eisenbud exactness criteria given in \cite{BE3}. The rank condition is immediately satisfied. We claim that ${\rm depth}(I({\bs \delta}_i(t)))\geq 4$ for $1\leq i\leq 4$ where $I({\bs \delta}_i(t))$ denotes the ideal generated by $r_i\times r_i$ minors of $\delta_i(t)$. By construction we see ideals  $(I({\bs \delta}_i(t)),t)$ and $(I({\bs \delta}_i),t)$ are equal. So depth of $I({\bs \delta}_i(t), t)$ is at least  $5$. Thus the claim follows. 
\end{proof}

As an immediate we have the following result.

\begin{remark}
		If we substitute $x_1=-c_{23}$, $x_2=-c_{13}$, $x_3=-c_{23}$, $x_4=t$ and send entries of ${M}$ of Section \ref{KMmodel} to entries of ${M}^\prime$, then the resolution in Theorem \ref{KMgen} is a specialization of KMM. 
\end{remark}

\section{Generic doubling of the resolution of format (1,5,6,2)}\label{(1,5,6,2)D}

\subsection{Resolution of type $(1,5,6,2)$}\label{(1,5,6,2)}
We recall perfect ideals of codimension 3 with 5 generators of Cohen-Macaulay type 2. For details, see \cite{CLWK}.

Let $K$ be a field of characteristics different from two. Let $R$ be a polynomial ring over $K$ with variables $x_{i,j}$, $y_{i,j}$ $(1\leq i<j\leq 4)$, and $z_{i,j,k}$ $(1\leq i<j<k\leq 4)$. Note that $R$ is a bi-graded ring with $\deg(x_{i,j})=\deg(y_{i,j})=(1,0)$, and $\deg(z_{i,j,k})=(0,1)$.

We use $\Delta(ij,kl)$ to denote the $2\times 2$ minors of the matrix 
$$\begin{bmatrix*}
x_{1,2} & x_{1,3} & x_{1,4} & x_{2,3} & x_{2,4} & x_{3,4}\\
y_{1,2} & y_{1,3} & y_{1,4} & y_{2,3} & y_{2,4} & y_{3,4}
\end{bmatrix*} $$
corresponding to the columns labeled by $(i,j)$ and $(k,l)$. The cubic generators of bidegree $(2,1)$ are $u_{1,2,3}$, $u_{1,2,4}$, $u_{1,3,4}$ and $u_{2,3,4}$, where
{\tiny
	\begin{align*}
	u_{1,2,3}&=-2z_{2,3,4}\Delta(12,13)+2z_{1,3,4}\Delta(12,23)-2z_{1,2,4} \Delta(13,23)+z_{1,2,3}(\Delta(13,24)-\Delta(12,34)+\Delta(14,23))\\
	u_{1,2,4}&=2z_{2,3,4}\Delta(12,14)-2z_{1,3,4}\Delta(12,24)+z_{1,2,4}(\Delta(12,34)+\Delta(13,24)+\Delta(14,23))- 2z_{1,2,3}\Delta(14,24)\\
	u_{1,3,4}&=2z_{2,3,4}\Delta(13,14)+z_{1,3,4}(-\Delta(12,34)-\Delta(13,24)+\Delta(14,23))+2z_{1,2,4}\Delta(13,34)- 2z_{1,2,3}\Delta(14,34)\\
	u_{2,3,4}&=z_{2,3,4}(-\Delta(12,34)-\Delta(13,24)-\Delta(14,23))-2z_{1,3,4}\Delta(23,24)+2z_{1,2,4}\Delta(23,34)- 2z_{1,2,3}\Delta(24,34).
	\end{align*}}
The generator of degree $(4,0)$  is $u=b^2-4ac$, where 
\begin{align*}
a &=x_{1,2}x_{3,4}-x_{1,3}x_{2,4}+x_{1,4}x_{2,3},\\
b &=x_{1,2}y_{3,4}-x_{1,3}y_{2,4}+x_{1,4}y_{2,3}+x_{3,4}y_{1,2}-x_{2,4}y_{1,3}+x_{2,3}y_{1,4},\\
c &=y_{1,2}y_{3,4}-y_{1,3}y_{2,4}+y_{1,4}y_{2,3}.
\end{align*}

Let $u_j=\sum_{i\neq j}(-1)^i x_{i,j} z_{\hat{i}}$, $v_j=\sum_{i\neq j}(-1)^iy_{i,j}z_{\hat{i}}$, $\delta_1=\Delta(12,34)$, $\delta_2=\Delta(13,24)$, and $\delta_3=\Delta(14,23)$. Let  $J=\langle u_{2,3,4},u_{1,3,4},u_{1,2,4}, u_{1,2,3},u\rangle.$
%
%

We recall from \cite[Section 3]{CLWK} the deformed ideal $J(t)$ which is an ideal in the bigger polynomial ring $S=R[t]$. The deformed matrices ${\bs d}_2$ and ${\bs d}_3$ become

{\small
	\begin{align*}
	{\bs d}_2(t)&=\setlength\arraycolsep{2pt} \begin{bmatrix*}
	v_1 & u_1 & -\delta_1+\delta_2-\delta_3+t & 2\Delta(13,14) & -2\Delta(12,14) & -2\Delta(12,13) \\
	-v_2 & -u_2 &-2\Delta(23,24) & -\delta_1-\delta_2+\delta_3+t & 2\Delta(12,24) & 2 \Delta(12,23) \\
	v_3 & u_3 & 2\Delta(23,34) & 2\Delta(13,34) & -\delta_1-\delta_2-\delta_3-t & -2\Delta(13,23)\\
	v_4 & u_4 &2\Delta(24,34) & 2\Delta(14,34) & -2\Delta(14,24) &\delta_1-\delta_2-\delta_3+t\\
	0 & 0&-z_{2,3,4} & -z_{1,3,4} & z_{1,2,4} & z_{1,2,3}
	\end{bmatrix*}\\[.5em]
	{\bs d}_3(t)&=\begin{bmatrix*}
	b+t & 2a\\
	-2c & -b+t\\
	-v_1& -u_1\\
	v_2 & u_2\\
	v_3 & u_3\\
	-v_4 & -u_4
	\end{bmatrix*}.
	\end{align*}}
Set $u_{1,2,3}(t)=-u_{1,2,3}+z_{1,2,3}t$, $u_{1,2,4}(t)=-u_{1,2,4}+z_{1,2,4}t$, $u_{1,3,4}(t)=-u_{1,3,4}+z_{1,3,4}t$, $u_{2,3,4}(t)=-u_{2,3,4}+z_{2,3,4}t$, $u(t)=u-t^2$. By \cite[Section 3]{CLWK}, $$J(t)=\langle u_{2,3,4}(t),u_{1,3,4}(t),u_{1,2,4}(t), u_{1,2,3}(t),u(t)\rangle$$ is a perfect ideal of codimension three in $S$. The minimal free resolution of $S/J(t)$ over $S$ is
\begin{equation}\label{defP}
\mathbb{F}:0\xrightarrow{}S(-7)^2\xrightarrow{{\bs d}_3(t)} S(-5)^6\xrightarrow{{\bs d}_2(t)}S(-4)\oplus S(-3)^4 \xrightarrow{{\bs d}_1(t)}S.
\end{equation}

\subsection{Generic doubling of complex of type (1,5,6,2)} \label{newfamily}
We discuss  generic doubling of perfect ideals of codimension 3 with 5 generators of Cohen-Macaulay type 2 given in (\ref{defP}) above. Applying ${\rm Hom}_S(-,S)$ to $\mathbb{F}$ one gets an acyclic complex
\[\mathbb{F}^{*}: 0\rightarrow S\xrightarrow{{\bs d}_3^{*}} S^{5}\xrightarrow{{\bs d}_2^{*}} S^{6}\xrightarrow{{\bs d}_1^{*}} S^2\rightarrow \omega_{S/J(t)}\rightarrow 0\] 
where ${\bs d}_1(t)^*=-{\bs d}_3(t)^T$, ${\bs d}_2(t)^*=-{\bs d}_2(t)^T$ and ${\bs d}_3(t)^*=-{\bs d}_1(t)^T$. Then we compute  ${\rm Hom}_{S/J(t)}(\omega_{S/J(t)},S/J(t))$ by Macaulay 2 \cite{GS}, 
which is generated by the image of the matrix

$$\begin{bmatrix}
u_4 & u_3 & u_2 & u_1 & b-t & 2a\\
-v_4 & -v_3 & -v_2 & -v_1 & -2c & -b-t\\
\end{bmatrix}$$

Consider the bigger polynomial ring $\widetilde S=S[\tau_1, \tau_2, \tau_3, \tau_4, \tau_5, \tau_6]$, and 
$$f_1(t)=-\tau_1u_4-\tau_2u_3-\tau_3u_2-\tau_4u_1+\tau_5b+2a \tau_6-\tau_5t$$
$$f_2(t)=\tau_1v_4+\tau_2v_3+\tau_3v_2+\tau_4v_1-2c\tau_5-b\tau_6-\tau_6t.$$

Let ${\bs \psi}_1(t)=\begin{bmatrix}
f_1(t) & f_2(t)\\
\end{bmatrix}$ and ${\bs \psi}_2(t)$ is the transpose of the matrix given in Figure~\ref{mat:psi_2(t)} at the end of the paper. Then $\overline {{\bs \psi}}_1(t):  \omega_{\widetilde{S}/J(t)\widetilde{S}} \rightarrow \widetilde{S}/J(t)\widetilde{S}$ lifts to  the  chain map $\bs \psi:\mathbb{\widetilde{F}}^*\rightarrow \mathbb{\widetilde{F}}$ such that 

{\small
	\[
	\xymatrixrowsep{1.8pc} \xymatrixcolsep{2.2pc}
	\xymatrix{
		\mathbb{\widetilde{F}}:0\ar@{->}[r]^{}&\widetilde{S}^2\ar@{->}[r]^{{\bs d}_3(t)}&\widetilde{S}^6\ar@{->}[r]^{{\bs d}_2(t)}& \widetilde{S}^5\ar@{->}[r]^{{\bs d}_1(t)}&\widetilde{S}\ar@{->}[r]^{}&\ar@{->}[r]^{}\widetilde{S}/J(t)\widetilde{S}&0\\
		\mathbb{\widetilde{F}}^*:0\ar@{->}[r]^{}&\widetilde{S}\ar@{->}[r]^{{\bs d}_3(t)^*}\ar@{->}[u]^{-{\bs \psi}_1(t)^T}&\widetilde{S}^5\ar@{->}[r]^{{\bs d}_2(t)^*}\ar@{->}[u]^{-{\bs \psi}_2(t)^T}& \widetilde{S}^6\ar@{->}[u]^{{\bs \psi}_2(t)}\ar@{->}[r]^{{\bs d}_1(t)^*}&\widetilde{S}^2\ar@{->}[r]^{}\ar@{->}[u]^{{\bs \psi}_1(t)}&\omega_{\widetilde{S}/J(t)\widetilde{S}}\ar@{->}[r]^{}\ar@{->}[u]^{{\overline {{\bs \psi}}_1}(t)}&0\\
	}
	\]
}
Let $I(t)=J(t)\widetilde{S}+\langle f_1(t),f_2(t) \rangle$. Then the mapping cone with respect to ${\bs \psi}$ gives complex of the form
\begin{equation}\label{mapcone}
\mathcal{C}({\bs \psi}):0\rightarrow \widetilde{S}\xrightarrow{{\bs {\bs \delta}}_4(t)} \widetilde{S}^7\xrightarrow{{\bs {\bs \delta}}_3(t)}\widetilde{S}^{12}\xrightarrow{{\bs \delta}_2(t)} \widetilde{S}^7\xrightarrow{{\bs \delta}_1(t)} \widetilde{S}\rightarrow \widetilde{S}/I(t)\rightarrow 0
\end{equation}
with differentials 
$${\bs \delta}_1(t)=\begin{bmatrix}
{\bs d}_1(t) & {\bs \psi}_1(t)
\end{bmatrix}, \;\; {\bs \delta}_2(t)=\begin{bmatrix}
{\bs d}_2(t) &{\bs \psi}_2(t)\\
{\bf 0} & -{\bs d}_3(t)^T\\
\end{bmatrix},$$
$${\bs \delta}_3(t)=\begin{bmatrix}
{\bs d}_3(t) & -{\bs \psi}_2(t)^T\\
0 & -{\bs d}_2(t)^T\\
\end{bmatrix},\;\;
{\bs \delta}_4(t)=\begin{bmatrix}
-{\bs \psi}_1(t)^T\\
-{\bs d}_1(t)^T\\
\end{bmatrix}.$$
Note that $\bs \delta_3=\bs s \bs \delta_2^T$ where $\bs s$ is a $12\times 12$ exchange matrix with entries given by $$\bs s_{ij}=\begin{cases} 1, & j=12-i+1\\ 
0, & j\neq 12-i+1,
\end{cases}$$
and $\bs s$ can be put in the form ${\bs {\tilde Q}}=\begin{bmatrix}
		0 & {\rm I}_{6}\\
		{\rm I}_6 & 0\\
		\end{bmatrix}
		$ up to permutation of columns.
%

\begin{theorem}\label{thspin5}  
		The spinor coordinates of resolution (\ref{mapcone}) are given in Table \ref{tab:resn11}.
\end{theorem}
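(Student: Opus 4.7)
The plan is to follow the strategy already exercised in Proposition \ref{Kmspin}: fix a hyperbolic basis of the middle free module $\widetilde S^{12}$ of the resolution (\ref{mapcone}), read off the Buchsbaum-Eisenbud multipliers from the $6\times 6$ minors of $\bs\delta_3(t)$, and then extract the spinor coordinates as the square roots guaranteed by Theorem \ref{mainthm} together with Remark \ref{rmkmainthm}. As the first step, I label the $i$th column of $\bs\delta_2(t)$ as $e_i$ for $1\le i\le 6$ and set $e_{-i}=e_{13-i}$. Since $\bs\delta_3(t)=\bs s\,\bs\delta_2(t)^T$ with $\bs s$ equal up to column permutation to the standard hyperbolic matrix $\widetilde Q=\left[\begin{smallmatrix}0&\mathrm{I}_6\\ \mathrm{I}_6&0\end{smallmatrix}\right]$, the ordered basis $\{e_1,\dots,e_6,e_{-1},\dots,e_{-6}\}$ is hyperbolic in the sense of Definition \ref{def1}, which puts us in the framework of Theorem \ref{mainthm} and guarantees that a spinor structure exists on (\ref{mapcone}).

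Next, for each multi-index $\mathcal{K}\subset\{\pm 1,\dots,\pm 6\}$ of cardinality $6$, I would compute the corresponding $6\times 6$ minor of $\bs\delta_3(t)$ and identify it with the Buchsbaum-Eisenbud multiplier $\bs a_{3,\mathcal{K}}$. The block form $\bs\delta_3(t)=\left[\begin{smallmatrix}\bs d_3(t) & -\bs\psi_2(t)^T\\ 0 & -\bs d_2(t)^T\end{smallmatrix}\right]$ is well suited to Laplace expansion along the first two columns: each nonvanishing minor factors as the product of an $r\times r$ minor of $\bs d_3(t)$ with a complementary $(6-r)\times(6-r)$ minor carved out of the blocks $-\bs\psi_2(t)^T$ and $-\bs d_2(t)^T$. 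Remark \ref{rmkmainthm} then implies that, once the connected component of $\mathrm{IGrass}(6,\widetilde S^{12})$ containing $\mathrm{im}(\bs\delta_3(t))$ is fixed, precisely those $\mathcal K=-\mathcal J\cup\mathcal J^c$ of the appropriate parity yield nonzero multipliers, each of which is the square of a spinor coordinate $\widetilde{\bs a}_{3,\mathcal J}$. Because $\widetilde S$ is a polynomial ring and hence normal, these square roots lie in $\widetilde S$.

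The remaining spinor coordinates, corresponding to multi-indices $\mathcal{K}$ in which some pair $\pm i$ is missing, are then recovered from Lemma \ref{lem:mapp}: by Remark \ref{rmkmainthm}(2), every such Buchsbaum-Eisenbud multiplier is a known quadratic expression in the already-determined spinor coordinates, so the missing entries are obtained by solving that system. Reading off all the resulting data assembles Table \ref{tab:resn11}, and the whole collection can be cross-validated against the quadratic Pfaffian identities of Cartan--Chevalley recorded in Section \ref{isotropic}.

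The main obstacle is bookkeeping. The matrix $\bs\psi_2(t)$ interlaces the doubling parameters $\tau_1,\dots,\tau_6$ and the deformation variable $t$ with the cubic invariants $u_j$, $v_j$, $\delta_k$ and with $a,b,c$ already present in $\bs d_2(t)$ and $\bs d_3(t)$, so the raw $6\times 6$ minors are lengthy polynomial expressions. Two kinds of sign tracking must be handled carefully: the reordering that converts the exchange form $\bs s$ into the hyperbolic form $\widetilde Q$, and the signature $\mathrm{sgn}(\mathcal J_{2k},\mathcal L)$ appearing in Lemma \ref{lem:mapp}. A consistent component and sign convention, matching the one used in Proposition \ref{Kmspin}, should be fixed at the outset; with that in hand, the remaining verification is a finite computation amenable to a Macaulay2 check on each stratum of $\mathcal J$, which then reproduces the entries listed in Table \ref{tab:resn11}.
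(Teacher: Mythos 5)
Your proposal follows essentially the same route as the paper's own proof: fix a hyperbolic basis on the middle module $\widetilde S^{12}$ via the columns of $\bs\delta_2(t)$, read the Buchsbaum--Eisenbud multipliers from the $6\times 6$ minors of $\bs\delta_3(t)$, and extract the spinor coordinates as square roots using Theorem~\ref{mainthm} and Remark~\ref{rmkmainthm}, then record them in Table~\ref{tab:resn11}. One small slip in terminology: the multipliers $\bs a_{3,\mathcal K}$ with some pair $\pm i$ absent are not ``remaining spinor coordinates'' to be solved for --- all $32$ spinor coordinates $\widetilde{\bs a}(t)_{3,\mathcal J}$ (for $\mathcal J\subset[1,6]$ of even cardinality) are already determined by the $32$ special minors of the form $\mathcal K=-\mathcal J\cup\mathcal J^c$, while Lemma~\ref{lem:mapp} merely expresses the other multipliers as quadratic consistency relations in those coordinates.
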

\begin{proof}
	By column operations in resolution (\ref{mapcone}), one gets differentials as
	$${\bs \delta}_1(t)=\begin{bmatrix}
	{\bs d}_1(t) & {\bs \psi}_1(t)
	\end{bmatrix},\;\; {\bs \delta}_2(t)=\begin{bmatrix}
	{\bs d}_2(t) &{\bs \psi}_2(t)\\
	{\bf 0} & -{\bs d}_3(t)^T\\
	\end{bmatrix},$$
	
	$${\bs \delta}_3(t)=\begin{bmatrix}
	-{\bs \psi}_2(t)^T &{\bs d}_3(t) \\
	-{\bs d}_2(t)^T &0 \\
	\end{bmatrix},\;\;  
	{\bs \delta}_4(t)={\bs \delta}_1(t)^T.$$ 
	%

	We denote the $i$th column of ${\bs \delta}_2(t)$ by $e_i$ with $e_{-i}=e_{13-i}$.  Note that hyperbolic pairs are $\{e_1,e_7\}$, $\{e_2,e_8\}$, $\{e_3,e_9\}$, $\{e_4,e_{10}\}$, $\{e_5,e_{11}\}$ and $\{e_6,e_{12}\}$. We calculate spinor coordinates using the Buchsbaum-Eisenbud map ${\bs a}_3(t)$. In Table \ref{tab:resn11} at the end of the paper, $\bar{i}$ denote the column corresponding to $e_{-i}$. 

	For $\mathcal{J}\subset [1,6]$, ${\bs a}(t)_{3,\mathcal{K}}=\widetilde{{\bs a}}(t)_{3,\mathcal{J}}^2$ where $\mathcal{K}=-\mathcal{J}\cup \mathcal{J}^c$ with cardinality of $\mathcal{J}$ is even. This means that there exists map $\widetilde{{\bs a}}(t)_3$ from $\widetilde S$ to half-spinor representation
	$V({\omega}_{6}, D_{6})$ sending 1 to the combination of basis vectors ${\bs v}_\mathcal{L}$ of weights
	$(\pm \frac{1}{2},\cdots,\pm \frac{1}{2})$ with spinor coordinates $\widetilde{{\bs a}}(t)_{3,\mathcal{L}}$ and even number of minuses (indicated by multi-index $\mathcal{L}$).
\end{proof}

We conclude with our main application. Using spinor coordinates, we show that the resolution (\ref{mapcone}), which is a generic doubling of perfect ideal with $5$ generators of Cohen-Macaulay type $2$, is not a specialization of the Kustin-Miller family given in Section~\ref{KMmodel}.

\begin{theorem} \label{notspofKM}
		The resolution given in (\ref{mapcone}) is not a specialization of the Kustin-Miller family in Section~\ref{KMmodel}.
\end{theorem}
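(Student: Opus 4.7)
The plan is to argue by contradiction using the spinor coordinates as a fine invariant. Suppose the resolution (\ref{mapcone}) were a specialization of KMM: then there would be a ring homomorphism $\phi$ from the KMM parameter ring $K[x_i,a_{jk},v]$ into $\widetilde S$ (possibly followed by an invertible change of hyperbolic basis on the middle free module $F_2$ of rank $12$) that carries the KMM differential matrix $\bs D$ to $\bs\delta_2(t)$. By Theorem \ref{mainthm} and the naturality of the Buchsbaum-Eisenbud multipliers, such a specialization forces the spinor coordinates of KMM listed in Table \ref{tab:7gens} to map, up to the induced action of $\mathrm{Spin}(12)$ on $V(\omega_6)$, to the spinor coordinates of (\ref{mapcone}) listed in Table \ref{tab:resn11}.

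The heart of the argument is to find a feature of the spinor system of KMM that is invariant under the $\mathrm{Spin}(12)$-action on half-spinors and that fails for the new family. The concrete invariant I would use is the image of the spinor coordinates in $I/\mathfrak{m}I$, together with its pairing against the minimal generators of $I$. For KMM one reads from Table \ref{tab:7gens} that exactly four of the seven minimal generators of $I_{KMM}$ are spinor coordinates (the four cubics $x_iv\pm M_{123;\ast}$) while the three quadrics $q_1,q_2,q_3$ are not spinor coordinates but occur only as common factors of the decomposable spinors $\pm x_iq_j$ and $a_{mj}q_n-a_{nj}q_m$. This ``four-out-of-seven'' profile, combined with the factorization pattern of the remaining spinors, survives any orthogonal change of basis because the linear span of spinor coordinates inside $I$ modulo $\mathfrak m I$ is invariant under $\mathrm{Spin}(12)$.

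I would then compute the corresponding profile for the new family from Table \ref{tab:resn11} and exhibit a discrepancy: either the count of minimal generators of $I(t)$ that are spinor coordinates differs from four, or the decomposable spinors of $I(t)$ fail to share the common-factor pattern enjoyed by $q_1,q_2,q_3$ in KMM. Either discrepancy is incompatible with $\phi$ existing, which yields the contradiction.

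The main obstacle is controlling the change-of-basis ambiguity: any $\phi$ carrying KMM to (\ref{mapcone}) is only determined up to an action of $\mathrm{Spin}(12)$ on $F_2$, and this action mixes spinor coordinates nontrivially. The proof therefore hinges on choosing an invariant that is manifestly orthogonal-equivariant (such as the rank and factorization type of the induced map $V(\omega_6)\otimes\widetilde S \to I(t)/\mathfrak m I(t)$), rather than trying to match individual spinor coordinates coordinate-by-coordinate. Once the right invariant is isolated, the comparison of Tables \ref{tab:7gens} and \ref{tab:resn11} becomes a finite, explicit check.
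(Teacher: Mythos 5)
Your proposal follows essentially the same strategy as the paper: compare how many minimal generators of the ideal appear among the spinor coordinates (four of seven for KMM, from Table~\ref{tab:7gens}, versus only $u(t)$ for the new family, from Table~\ref{tab:resn11}), and observe that this count is incompatible with a specialization carrying one resolution to the other. Your extra attention to the $\mathrm{Spin}(12)$-invariance of the span of spinor coordinates inside $I/\mathfrak{m}I$ makes explicit a change-of-hyperbolic-basis subtlety the paper treats implicitly, but the core invariant and the resulting contradiction are the same.
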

\begin{proof}
Suppose resolution (\ref{mapcone}) is a specialization of KMM given in Section~\ref{KMmodel}. Specialization is a ring homorphism which takes $\bs a_{3,\mathcal{K}}$ for $\mathcal{K}\subset [1,6]$ of resolution (\ref{mapcone}) to $\bs a_{3,\mathcal{L}}$ for $\mathcal{L}\subset [1,6]$ of KMM. Therefore spinor coordiates in Table \ref{tab:resn11} goes to spinor coordinates in Table \ref{tab:7gens} of KMM. In Table \ref{tab:resn11}, we see that only one spinor coordinate is among  minimal generators of the ideal in resolution (\ref{mapcone}). Then by specialization  KMM can have at most one spinor coordinate among minimal generators of $I$ in Section~\ref{KMmodel}. This is not possible as  Table \ref{tab:7gens} has four spinor coordinates among minimal generators of $I$ in Section~\ref{KMmodel}. \end{proof}

\begin{remark} \label{finalrmk} Calculations in Examples \ref{spinci}, \ref{spinpfaff}, Proposition \ref{Kmspin}, and Theorem \ref{thspin5} show that at least one of the minimal generators of a Gorenstein ideal with 4, 6, or 7 generators are among the spinor coordinates. However, in case of a Gorenstein ideal with 9 generators, we see in Example~\ref{tomjerry} that none of the minimial generators of the ideal comes from spinor coordinates. This suggests that Gorenstein ideals of codimension 4 with up to 8 generators are easier to classify than those with more than 8 generators.
\end{remark}

\section{Acknowledgments}
J. Laxmi was supported by Fulbright-Nehru fellowship. J. Weyman was partially supported by NSF grant DMS 1802067, Sidney Professorial Fund, and Polish National Agency for Academic Exchange. E. Celikbas and J. Laxmi also acknowledge the partial support of the Sidney Professorial Fund from the University of Connecticut.

\newpage

\begin{figure}
	\caption{The Matrix ${\bs \psi}_2(t)$}
	\label{mat:psi_2(t)}
	\centering
	\scalebox{0.85}{  
		\rotatebox{90}{$\begin{bmatrix*}
			-x_{14}\tau_4-x_{24}\tau_3-x_{34}\tau_2 & x_{13}\tau_4+x_{23}\tau_3-x_{34}\tau_1 & -x_{12}\tau_4+x_{23}\tau_2+x_{24}\tau_1  & -x_{12}\tau_3-x_{13}\tau_2-x_{14}\tau_1 & \tau_5\\
			y_{14}\tau_4+y_{24}\tau_3+y_{34}\tau_2& -y_{13}\tau_4-y_{23}\tau_3+y_{34}\tau_1 & y_{12}\tau_4-y_{23}\tau_2-y_{24}\tau_1 &y_{12}\tau_3+y_{13}\tau_2+y_{14}\tau_1 & \tau_6\\
			x_{14}\tau_6+y_{14}\tau_5-\frac 12z_{124}\tau_3-\frac 12z_{134}\tau_2 &-x_{13}\tau_6-y_{13}\tau_5+\frac 12z_{123}\tau_3-\frac 12z_{134}\tau_1 & x_{12}\tau_6+y_{12}\tau_5+\frac 12z_{123}\tau_2+\frac 12z_{124}\tau_1 & 0 & 0 \\
			-x_{24}\tau_6-y_{24}\tau_5-\frac 12z_{124}\tau_4+\frac 12z_{234}\tau_2 & x_{23}\tau_6+y_{23}\tau_5+\frac 12z_{123}\tau_4+\frac 12z_{234}\tau_1 & 0 & -x_{12}\tau_6-y_{12}\tau_5-\frac 12z_{123}\tau_2-\frac 12z_{124}\tau_1 & 0\\
			x_{34}\tau_6+y_{34}\tau_5+\frac 12z_{134}\tau_4+\frac 12z_{234}\tau_3 & 0 & -x_{23}\tau_6-y_{23}\tau_5-\frac 12z_{123}\tau_4-\frac 12z_{234}\tau_1 & x_{13}\tau_6+y_{13}\tau_5-\frac 12z_{123}\tau_3+\frac 12z_{134}\tau_1 & 0\\
			0 & -x_{34}\tau_6-y_{34}\tau_5-\frac 12z_{134}\tau_4-\frac 12z_{234}\tau_3 &x_{24}\tau_6+y_{24}\tau_5+\frac 12z_{124}\tau_4-\frac 12z_{234}\tau_2 & -x_{14}\tau_6-y_{14}\tau_5+\frac 12z_{124}\tau_3+\frac 12z_{134}\tau_2 & 0 \\
			\end{bmatrix*}^T$}}
\end{figure}

		\begin{table}[ht]
			\caption{Spinor coordinates of resolution (\ref{mapcone})}
			\label{tab:resn11}
			{\renewcommand{\arraystretch}{1.33}
				\begin{tabular}{ |p{13cm}|}
					\hline
					\multicolumn{1}{|c|}{Cases for $\widetilde{a}(t)_{3,\mathcal{J}}$}\\
					\hline
													$\tilde{a}(t)_{3,\{\phi\}}=0$\\
								\hline
								$\tilde{a}(t)_{3,\{1,2\}}=iu(t)$\\
								\hline
								$\tilde{a}(t)_{3,\{1,3\}}=i(x_{24}u_{1,3,4}(t)-x_{34}u_{1,2,4}(t)-x_{14}u_{2,3,4}(t))$\\
								\hline
								$\tilde{a}(t)_{3,\{1,4\}}= i(x_{23}u_{1,3,4}(t)-x_{34}u_{1,2,3}(t)-x_{13}u_{2,3,4}(t))$\\
								\hline
								$\tilde{a}(t)_{3,\{1,5\}}=-x_{24}u_{1,2,3}(t)+x_{23}u_{1,2,4}(t)-x_{12}u_{2,3,4}(t)$\\
								\hline
								$\tilde{a}(t)_{3,\{1,6\}}=x_{13}u_{1,2,4}(t)-x_{14}u_{1,2,3}(t)-x_{12}u_{1,3,4}(t)$\\
								\hline 
								$\tilde{a}(t)_{3,\{2,3\}}=i(y_{34}u_{1,2,4}(t)-y_{24}u_{1,3,4}(t)+y_{14}u_{2,3,4}(t))$\\
								\hline
								$\tilde{a}(t)_{3,\{2,4\}}=i(y_{34}u_{1,2,3}(t)-y_{23}u_{1,3,4}(t)+y_{13}u_{2,3,4}(t))$\\
								\hline
								$\tilde{a}(t)_{3,\{2,5\}}=-y_{24}u_{1,2,3}(t)+y_{23}u_{1,3,4}(t)-y_{12}u_{2,3,4}(t)$\\
								\hline
								$\tilde{a}(t)_{3,\{2,6\}}=-y_{14}u_{1,2,3}(t)+y_{13}u_{1,2,4}(t)-y_{12}u_{1,3,4}(t)$\\
								\hline
								$\tilde{a}(t)_{3,\{3,4\}}=\frac{1}{2}[i(z_{2,3,4}u_{1,3,4}(t)-z_{1,3,4}u_{2,3,4}(t))]$\\
								\hline
								$\tilde{a}(t)_{3,\{3,5\}}=\frac{1}{2}[z_{2,3,4}u_{1,2,4}(t)-z_{1,2,4}u_{2,3,4}(t)]$\\
								\hline
								$\tilde{a}(t)_{3,\{3,6\}}=\frac{1}{2}[z_{1,3,4}u_{1,2,4}(t)-z_{1,2,4}u_{1,3,4}(t)]$\\
								\hline
								$\tilde{a}(t)_{3,\{4,5\}}=\frac{1}{2}[z_{2,3,4}u_{1,2,3}(t)-z_{1,2,3}u_{2,3,4}(t)]$\\
								\hline
								$\tilde{a}(t)_{3,\{4,6\}}=\frac{1}{2}[z_{1,3,4}u_{1,2,3}(t)-z_{1,2,3}u_{1,3,4}(t)]$\\
								\hline
								$\tilde{a}(t)_{3,\{5,6\}}=\frac{1}{2}[i(z_{1,2,4}u_{1,2,3}(t)-z_{1,2,3}u_{1,2,4}(t))]$\\
								\hline
								 $\tilde{a}(t)_{3,\{1,2,3,4\}}=\frac{1}{2}[i(\tau_4u_{1,3,4}(t)+\tau_3u_{2,3,4}(t)+2y_{34}f_1(t)+2x_{34}f_2(t))]$\\
								 \hline 
								 $\tilde{a}(t)_{3,\{1,2,3,5\}}=\frac{1}{4}[-\tau_4 u_{1,2,4}(t)+\tau_2u(t)_{2,3,4}-2y_{24}f_1(t)-2x_{24}f_2(t)]$\\
								 \hline
								 $\tilde{a}(t)_{3,\{1,2,3,6\}}=\frac{1}{2}[\tau_3 u_{1,2,4}(t)+\tau_2u_{1,3,4}(t)-2y_{14}f_1(t)-2x_{14}f_2(t)]$ \\
								 \hline
								 $\tilde{a}(t)_{3,\{1,2,4,5\}}=\frac{1}{4}[-\tau_4 u_{1,2,3}(t)-\tau_1 u_{2,3,4}(t)-2y_{23}f_1(t)-2x_{23}f_2(t)]$\\
								 \hline
								 $\tilde{a}(t)_{3,\{1,2,4,6\}}=\frac{1}{2}[\tau_3 u_{1,2,3}(t)-\tau_1u_{1,3,4}(t)-2y_{13}f_1(t)-2x_{13}f_2(t)]$\\
								 \hline
								 $\tilde{a}(t)_{3,\{1,2,5,6\}}=\frac{1}{4}[i(-\tau_2 u_{1,2,3}(t)-\tau_1u_{1,2,4}(t)-2y_{12}f_1(t)-2x_{12}f_2(t))]$\\
								 \hline
								  $\tilde{a}(t)_{3,\{1,3,4,5\}}=\frac{1}{2}[-\tau_5u_{2,3,4}(t)+z_{2,3,4}f_1(t)]$\\
								  \hline
								  $\tilde{a}(t)_{3,\{1,3,4,6\}}=\frac{1}{4}[i(-\tau_5u_{1,3,4}(t)+z_{1,3,4}f_1(t))]$\\
								  \hline
								  $\tilde{a}(t)_{3,\{2,3,4,5\}}=\frac{1}{2}[\tau_6u_{2,3,4}(t)-z_{2,3,4}f_2(t)]$\\
								  \hline
								  $\tilde{a}(t)_{3,\{2,3,4,6\}}=\frac{1}{2}[\tau_6u_{1,3,4}(t)-z_{1,3,4}f_2(t)]$\\
								  \hline
								  $\tilde{a}(t)_{3,\{2,3,5,6\}}=\frac{1}{2}[i(\tau_6u_{1,2,4}(t)-z_{1,2,4}f_2(t))]$\\
								  \hline
								  $\tilde{a}(t)_{3,\{2,4,5,6\}}=\frac{1}{2}[i(-\tau_6u_{1,2,3}(t)-z_{1,2,3}f_2(t))]$\\
								  \hline
								  $\tilde{a}(t)_{3,\{1,3,4,5\}}=\frac{1}{2}[i(-\tau_5u_{1,2,3}(t)+z_{1,2,3}f_1(t))]$\\
								  \hline 
								  $\tilde{a}(t)_{3,\{1,3,5,6\}}=\frac{1}{2}[i(-\tau_5u_{1,2,4}(t)+z_{1,2,4}f_1(t))]$\\
								  \hline
								  $\tilde{a}(t)_{3,\{1,4,5,6\}}=\frac{1}{2}[i(-\tau_5u_{1,2,3}(t)+z_{1,2,3}f_1(t))]$\\
								  \hline
								  $\tilde{a}(t)_{3,\{1,2,3,4,5,6\}}=\frac{1}{2}[i(\tau_6f_1(t)-\tau_5f_2(t))]$\\
								  \hline
				
			\end{tabular}}
		\end{table}

\begin{thebibliography}{}
%
%

\bibitem{A} Adams, J. F., Lectures on exceptional {L}ie groups, Chicago Lectures in Mathematics, University of Chicago Press, Chicago, IL, (1996)

\bibitem{BH} Bruns W., Herzog J., Cohen-Macaulay rings, Cambridge Stud. Adv. Math., vol. 39, Cambridge Univ. Press, Cambridge, (1993).

\bibitem{BE1} Buchsbaum D. A., Eisenbud D., Some structure theorems for finite free resolution, Adv. Math. 12, 84--139 (1974) 

\bibitem{BE2}  Buchsbaum D.A., Eisenbud D., Algebra structures for finite free resolutions, and some structure theorems for ideals of codimen 3, Amer. J. Math. 3, 447--485 (1977)

\bibitem{BE3} Buchsbaum D.A., Eisenbud D., What Makes a Complex Exact?, J. Algebra 25,  259--268 (1973)

\bibitem{C} Cartan, E., The theory of spinors, The M.I.T. Press, Cambridge, Mass. (1967)
 
\bibitem{CLWK} Celikbas E., Laxmi J.,  Weyman J., Kraskiewicz W., The family of perfect ideals of codimension 3, of type 2 with 5 generators, Proc. Amer. Math. Soc. 148 (2020), 2745-2755

\bibitem{CM} Chirivi, R., Maffei, A, Pfaffians and Shuffling Relations for the Spin Module, Algebra Representation Theory (2013), 16:955-978

\bibitem{CVW} Christensen L.W., Veliche O., Weyman J., Three takes on almost complete intersection ideals of grade 3, in preparation

\bibitem{DP} Deligne P., Notes on Spinors, https, Quantum Fields and Strings: a Course for Mathematicians, Vol. 1, 2, pp. 99–135, Amer. Math. Soc., Providence, RI, (1999) 

\bibitem{FH} Fulton, W., Harris, J., Representation Theory, a First Course, Readings in Mathematics, vol. 129, Springer-Verlag, (2004)

\bibitem{GS} Daniel R.G., Stillman M.E., Macaulay 2, a software system for research in algebraic geometry, http://www.math.uiuc.edu/Macaulay2/



\bibitem{GW} Goodman R.,  Wallach N.R., Symmetry, Representations, and Invariants, Grad. Texts in Math., Springer New York, (2009) 

\bibitem{Jen} Jantzen J.C., Representation of Algebraic Groups, Math. Surveys Monogr., Vol. 107, (2003) 

\bibitem{Kn} Knuth, D., Overlapping Pfaffians, Electronic Journal of Combinatorics, vol. 3, No. 2, (1996) 


\bibitem{KM1} Kustin A.R. , Miller M., Constructing big {G}orenstein ideals from small ones, J. Algebra 85, (1983), 303--322

\bibitem{KM2}  Kustin A.R. and Miller M., Algebra structure on minimal resolution of Gorenstein rings of embedding codimension four, Math. Z. 173 (1980), 171--184

\bibitem{KM3} Kustin A.R. and Miller M., Deformation and linkage of {G}orenstein algebras, Trans. Amer. Math. Soc. 284 (1984), 501--534

\bibitem{KU} Kustin A.R. Ulrich, B., A family of Complexes Associated to Almost Alternating Map with Applications to Residual Intersections, 
Memoirs of AMS, No. 461, (1992)

\bibitem{R} Reid M., Gorenstein in codimension 4 -the general structure theory,  Algebraic Geometry in East Asia (Taipei Nov. 2011), Adv. Stud. Pure Math. 65, (2011), 201-227.

\bibitem{JW} Jerzy Weyman, \emph{Cohomology of Vector Bundles and Syzygies}, Cambridge Tracts in Mathematics \textbf{149},
  Cambridge University Press 2003. \MR{MR1988690}

\end{thebibliography}
		\end{document}